\numberwithin{equation}{section}
    \definecolor{red_cite}{RGB}{75, 128, 82}
\theoremstyle{plain}
\newtheorem{Th}{Theorem}[section]
\newtheorem{Lemma}[Th]{Lemma}
\newtheorem{Cor}[Th]{Corollary}
\newtheorem{Prop}[Th]{Proposition}
\theoremstyle{definition}
\newtheorem{Def}[Th]{Definition}
\newtheorem{Rem}[Th]{Remark}
\newtheorem{?}[Th]{Problem}
\newcommand{\Hom}{\mathrm{Hom}}
\newcommand{\Mod}[1]{#1\text{-}\mathrm{Mod}}
\newcommand{\FdMod}[1]{#1\text{-}\mathrm{FdMod}}
\newcommand{\Cat}[1]{\mathrm{#1}}
\newcommand{\Sub}{\mathrm{Sub}}
\newcommand{\Image}{\mathrm{Im}}
\newcommand{\colim}{\mathrm{colim}}
\newcommand{\Ind}[1]{\mathrm{Ind}({#1})}
\newcommand{\Succ}[1]{\mathrm{Succ}({#1})}
\newcommand{\Full}[1]{\mathrm{Full}(#1)}
\newcommand{\mv}[2]{\mathrm{mv}(^{#1}{#2})}
\newcommand{\stackstag}[1]{\href{https://stacks.math.columbia.edu/tag/#1}{#1}}
\title{Towards constructivising the Freyd--Mitchell embedding theorem}
\author{Anna Giulia Montaruli}
\begin{document}

\begin{abstract}
The aim of the paper is to first point out that the classical proof of the Freyd--Mitchell Embedding Theorem does not work in $\mathbf{CZF}$; then, to propose an alternative embedding of a small abelian category into the category of sheaves of modules over a ringed space, which works constructively. It is necessary to mention that this work has been initially inspired by Erik Palmgren, who unexpectedly passed away in November 2019: I'm very grateful to him for having shared with me his intuitions, and for having supervised the realization of the first half of the paper.
\end{abstract}

\maketitle

\section{Introduction}
What is a good ``concrete'' description of a small abelian category? Classically, the answer to this question is given by the Freyd--Mitchell Embedding Theorem, which asserts that every small abelian category $\mathcal{A}$ admits a full, exact embedding into the category $\Mod{R}$ of modules over an appropriate ring $R$ (see \cite{Mitchell} or \cite{Freyd}). Thanks to this result, we can think about the objects of $\mathcal{A}$ as modules over some ring $R$, and of the maps in $\mathcal{A}$ as modules homomorphisms. If we analyze some different proofs, however, we come across constructive issues.
In fact, a non constructive argument has two shortcomings: on the one hand, the general project of constructivising mathematics is itself of intrinsic interest; on the other hand, non constructive arguments can often give proofs that are somehow ``mysterious'', while constructive arguments clarify more concretely what exactly is going on. The motivation of the work presented here lies in this context; indeed, its aim is to first show and explain what in the proofs of the Freyd--Mitchell Embedding Theorem fails when working within $\mathbf{CZF}$ (a constructive set theory), and then to propose an alternative constructive embedding of $\mathcal{A}$ into the category of sheaves of modules over a ringed space. 

The organization of the paper is as follows. In Section \ref{FM} we analyze the outline of a proof of the Freyd--Mitchell Embedding Theorem, and we show that, while working in $\mathbf{CZF}$, a particular small abelian category $\mathbb{G}$ gives Brouwerian counterexamples to two of their ingredients, namely the well-poweredness of the category $[\mathcal{A} , \Cat{Ab}]$ of additive functors from $\mathcal{A}$ to the category $\Cat{Ab}$ of abelian groups, and the standard construction of enough injectives for  $[\mathcal{A} , \Cat{Ab}]$. In this section, we include also an intermediate result about the existence of enough injectives in the category $\Cat{Ab}$, and we mention that some unprovability results also apply to other systems. 

In Section \ref{Description-new-embedding} we describe the constructive embedding of $\mathcal{A}$ into the category of sheaves of modules over a ringed space; this is obtained using two already known embeddings, one of which requires that the site $(\mathcal{A} , R)$, where $R$ is the regular topology, has a set of conservative points. Section \ref{site_has_enough_points} is devoted to formulating a constructive proof that this holds.

In Appendix \ref{appendix A}, the reader will find a brief description of the axioms of $\mathbf{CZF}$. We refer to \cite{CZF} for a detailed exposition. $\mathbf{CZF}$ is a fairly weak theory, so all results proven there must hold also in stronger systems such as $\mathbf{IZF}$, and those not involving unbounded quantification  work in $\mathbf{IHOL}$ too. See \cite{Scedrov} for details on $\mathbf{IZF}$, and \cite[Chapter II.1]{lambek-scott:intro} for details on $\mathbf{IHOL}$.

Throughout all the paper, whenever we reference a result from classical literature in a constructive proof, this means that the proof of the result referenced also works in $\mathbf{CZF}$. Moreover, the reader will find definitions that are classically well known, since in a constructive context one must clarify which definition is using among classically equivalent alternatives.

\subsection*{Acknowledgements}
After Erik Palmgren's unexpected death in November 2019, the supervision of this work has been continued by Peter LeFanu Lumsdaine, who has always been available with his precious comments and suggestions, and to whom I'm very grateful. Many thanks also to Johan Lindberg for some interesting discussions.

\section{Obstacles to constructivising the existing proofs}\label{FM}

The Freyd--Mitchell Embedding Theorem asserts that every small abelian category $\mathcal{A}$ admits a full, exact embedding into the category $\Mod{R}$ of modules over an appropriate ring $R$. 
The proof, as it is presented in \cite[Theorem VI.7.2]{Mitchell}, follows this outline:
\begin{enumerate}
\item[$\diamond$] the functor $\mathcal{A} \rightarrow \mathcal{L}(\mathcal{A}, \Cat{Ab})$, sending an object $A$ to $H^A \coloneqq Hom_{\mathcal{A}}(A, -)$, is a contravariant, full, exact embedding from $\mathcal{A}$ into the abelian category $\mathcal{L}(\mathcal{A}, \Cat{Ab})$ of left exact additive functors from $\mathcal{A}$ to $\Cat{Ab}$;
\item[$\diamond$] by composing this embedding with the duality functor $(-)^{op}$ 
on $\mathcal{L}(\mathcal{A}, \Cat{Ab})$, we obtain a full, exact, covariant embedding $S:\mathcal{A}\rightarrow \mathcal{L}(\mathcal{A}, \Cat{Ab})^{op}$;
\item[$\diamond$] by \cite[Theorem VI.6.2]{Mitchell}, $\mathcal{L}(\mathcal{A}, \Cat{Ab})$ has an injective cogenerator, and so $\mathcal{L}(\mathcal{A}, \Cat{Ab})^{op}$ has a projective generator;
\item[$\diamond$] every object in the image of $S$ is finitely generated with respect to the family $\{P' \coloneqq \underset{\substack{A\in\mathcal{A} \\ f : P \rightarrow H^{A^{op}}}}{\coprod} P \}$ (finitely generated in the sense of \cite[pag.72]{Mitchell});
\item[$\diamond$] by \cite[Theorem IV.4.1]{Mitchell}, defining $R \coloneqq Hom(P' , P')$, we see that the functor $T : \mathcal{L}(\mathcal{A}, \Cat{Ab})^{op} \rightarrow \Mod{R}$ sending $X$ to $Hom_{\mathcal{L}(\mathcal{A}, \Cat{Ab})^{op}}(P', X)$ is a full, exact embedding;
\item[$\diamond$] the composition $T\cdot S : \mathcal{A} \rightarrow \Mod{R}$ is the full exact embedding we are looking for.
\end{enumerate}

Some parts of this proof present non constructive features.
Indeed, to prove that $\mathcal{L}(\mathcal{A}, \Cat{Ab})$ is abelian, \cite[Paragraph VI.6]{Mitchell} shows that this category is the category of pure objects with respect to the category of monofunctors (i.e.\ functors preserving monomorphisms); to achieve this, $\mathcal{A}$ is supposed to be well-powered (i.e.\ that any subobject collection of $\mathcal{A}$ is a set), and the category $[ \mathcal{A},\Cat{Ab}]$ of additive functors from $\mathcal{A}$ to $\Cat{Ab}$ is supposed to have injective envelopes.
Moreover, to conclude that the category $\mathcal{L}( \mathcal{A},\Cat{Ab})$ has an injective cogenerator, the proof relies on the fact that the category $[ \mathcal{A},\Cat{Ab}]$ has enough injectives (weaker than the existence of injective envelopes, but still suspicious) and is well-powered.
Also, we remark that the proof that every object in the image of $S$ is finitely generated with respect to $\{P'\}$ is based on the assumption that every $Hom_{\mathcal{L}(\mathcal{A}, \Cat{Ab})^{op}}(P, (H^A)^{op})$ is detachable from $\underset{A\in\mathcal{A}}{\coprod} Hom_{\mathcal{L}(\mathcal{A}, \Cat{Ab})^{op}}(P, (H^A)^{op})$ or, equivalently, that the set of objects of $\mathcal{A}$ has decidable equality. 
There are also other slightly different proofs of the theorem; nevertheless, all of them seem to have similar issues.

In this section we show that, working in Constructive Zermelo--Fraenkel Set Theory ($\mathbf{CZF}$), a certain small abelian subcategory $\mathbb{G}$ of $\Cat{Ab}$ gives us Brouwerian counterexamples to the well-poweredness of $[\mathcal{A}, \Cat{Ab}]$ and to the standard way of constructing enough injectives in $[\mathcal{A}, \Cat{Ab}]$, i.e.\ it shows that each of these implies a classicality/impredicativity principle which is known to be unprovable in $\mathbf{CZF}$. As we mention, the unprovability result about the standard construction of enough injectives apply also to $\mathbf{IZF}$ and $\mathbf{IHOL}$.

\subsection{The category \texorpdfstring{$\mathbb{G}$}{G} of finite direct powers of \texorpdfstring{$\mathbb{Z}_2$}{Z2}}\label{G}
We define $\mathbb{G}$ as the full subcategory of $\Cat{Ab}$ whose collection of objects is given by $\{ \mathbb{Z}_2^{\oplus n} \}_{n\in \mathbb{N}}$.
We identify the object $\mathbb{Z}_2^{\oplus 0} $ with the zero object $\{ 0 \} $.

A map $\mathbb{Z}_2^{\oplus n} \stackrel{f}{\longrightarrow} \mathbb{Z}_2^{\oplus m}$ is specified by a matrix $[ f_{i,j}]_{j\in m}^{i \in n}$, whose components are given by the composition:
\[ \xymatrix{ \mathbb{Z}_2^{\oplus n} \ar[rr]^-{f} & & \mathbb{Z}_2^{\oplus m}\ar[d]^-{\pi_j} \\
\mathbb{Z}_2\ar[u]^-{m_i} \ar[rr]_-{f_{i,j}} & & \mathbb{Z}_2 } \]
where the arrows $\{m_i \}_{i\in n}$ (resp.\ $\{\pi_j \}_{j\in m}$) are the canonical injections (resp.\ projections) of the biproduct $\mathbb{Z}_2^{\oplus n}$ (resp.\ $\mathbb{Z}_2^{\oplus m}$).

Note that, since every $f_{i,j}$ is a group homomorphism from $\mathbb{Z}_2 $ to $\mathbb{Z}_2 $, then it can only be either equal to $id_{\mathbb{Z}_2}$ or to the zero map. Hence equalities of maps from $\mathbb{Z}_2 $ to $\mathbb{Z}_2 $ are decidable, meaning that, for any two maps $f$ and $g$, either $f=g$ or $f\neq g$. This decidability can be extended (componentwise) to a decidability of equality of arrows in $\mathbb{G}$.

It is easy to see that the category $\mathbb{G}$ defined above is small. Furthermore, being a full subcategory of $\Cat{Ab}$, $\mathbb{G}$ will have kernels (resp.\ cokernels) exactly when, for every map $f$ in $\mathbb{G}$, a kernel (resp.\ a cokernel) of $f$ in $\Cat{Ab}$ will lie in $\mathbb{G}$. 

\begin{Lemma}\label{kernels}
The category $\mathbb{G}$ has kernels.
\end{Lemma}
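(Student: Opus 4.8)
The plan is to exploit the reduction noted just above the statement: since $\mathbb{G}$ is a \emph{full} subcategory of $\Cat{Ab}$, it suffices to check that for every morphism $f\colon\mathbb{Z}_2^{\oplus n}\to\mathbb{Z}_2^{\oplus m}$ of $\mathbb{G}$ some kernel of $f$ taken in $\Cat{Ab}$ already lies in $\mathbb{G}$; its universal property is then inherited automatically, being tested only against objects and maps of $\mathbb{G}$, which are objects and maps of $\Cat{Ab}$. So fix such an $f$, presented by its matrix $[f_{i,j}]$ with entries in $\{0,\,id_{\mathbb{Z}_2}\}$, which we read as an $m\times n$ matrix $M$ over the two-element field $\mathbb{Z}_2$. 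The kernel of $f$ in $\Cat{Ab}$ is the subgroup $N=\{x\in\mathbb{Z}_2^{\oplus n}\mid Mx=0\}$ together with its inclusion; it therefore suffices to produce a natural number $k$ and an isomorphism of abelian groups $\varphi\colon\mathbb{Z}_2^{\oplus k}\to N$, say with $\varphi$ sending the $i$-th standard generator to $v_i\in\mathbb{Z}_2^{\oplus n}$. Indeed the composite of $\varphi$ with the inclusion $N\hookrightarrow\mathbb{Z}_2^{\oplus n}$ is then a morphism of $\mathbb{G}$, as each coordinate of each $v_i$ lies in $\{0,\,id_{\mathbb{Z}_2}\}$, and it is a kernel of $f$.

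To produce $k$ and $\varphi$ I would compute a basis of $N$, and here the decidability of equality of arrows in $\mathbb{G}$ established above is exactly what is used. Since $\mathbb{Z}_2$ is a field with decidable equality, Gaussian elimination on $M$ is a finitary, effective procedure available in $\mathbf{CZF}$ — choosing pivots amounts to deciding whether a given entry is $0$ or $1$, and the row and column operations are finite sums — and it returns the rank $r$ of $M$ together with an explicit linearly independent family $v_1,\dots,v_k\in\mathbb{Z}_2^{\oplus n}$, with $k=n-r$, that spans $N$. (Equivalently, and perhaps more transparently: $Mx=0$ is a decidable condition, so $N$ is a detachable, hence finite, subset of the finite set $\mathbb{Z}_2^{\oplus n}$; enumerating its elements and greedily keeping a maximal linearly independent family — linear independence over $\mathbb{Z}_2$ being decidable by a finite computation — yields the same data.) Then $\varphi(a_1,\dots,a_k)=\sum_i a_i v_i$ is the desired isomorphism.

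I do not expect a serious obstacle: the one point needing care is that the construction of a basis of $N$ genuinely goes through in $\mathbf{CZF}$, and this is unproblematic because every set involved — $\mathbb{Z}_2^{\oplus n}$, the matrix $M$, the subgroup $N$, the candidate basis — is finite with decidable equality, so all the choices made along the way are decidable and no unbounded search or choice principle enters. The only other thing to make explicit is that a kernel of $f$ in $\mathbb{G}$ is the same as a kernel of $f$ in $\Cat{Ab}$ landing in $\mathbb{G}$, which is the reduction recalled at the start and uses nothing beyond fullness of $\mathbb{G}\hookrightarrow\Cat{Ab}$.
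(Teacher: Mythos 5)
Your proof is correct, and it takes a more computational route than the paper. The paper reduces the lemma to the stronger structural statement that \emph{every} subgroup $K$ of $\mathbb{Z}_2^{\oplus n}$ is isomorphic to $\mathbb{Z}_2^{\oplus p}$ for some $p\leq n$, arguing via Lagrange's theorem (so $\lvert K\rvert = 2^p$), the fact that all non-zero elements have order $2$, and a maximal linearly independent subset of $K$, which then gives $K\cong\bigoplus_i\langle x_i\rangle\cong\mathbb{Z}_2^{\oplus p}$; your parenthetical variant (enumerate the detachable kernel, greedily extract a maximal linearly independent family) is essentially this argument specialized to the kernel. Your main route instead runs Gaussian elimination directly on the matrix of $f$ over the two-element field, which buys something real: every object in sight is a finite set with decidable equality and the kernel is cut out by a decidable condition, so the construction is manifestly a finite computation in $\mathbf{CZF}$, and you never need to discuss cardinality or Lagrange's theorem for a subgroup that is merely \emph{given} abstractly (for an arbitrary, not necessarily detachable, subgroup the paper's cardinality talk is the one point a constructive reader must pause over — it is harmless here precisely because kernels and images are detachable, which your formulation makes explicit). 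What the paper's formulation buys in exchange is reuse: the general subgroup statement is invoked again in Lemma \ref{cokernels} to handle $\Image f$ and the quotient $\mathbb{Z}_2^{\oplus m}/\Image f$, whereas your argument as written covers only kernels (though Gaussian elimination would of course also deliver a basis of the column space, so the cokernel case could be treated in the same style). Your treatment of the reduction via fullness of $\mathbb{G}\hookrightarrow\Cat{Ab}$ matches the remark preceding the lemma in the paper and is fine.
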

\begin{proof}
In order to prove that the category $\mathbb{G}$ has kernels, it is enough to show that any subgroup $K$ of $\mathbb{Z}_2^{\oplus n}$ is isomorphic to $\mathbb{Z}_2^{\oplus p}$ for some $p \leq n$. We already know that this result is valid in a classical setting; here, we explicitly rewrite the proof, in order to show its constructiveness.  

First, note that, by Lagrange's Theorem (whose proof still works constructively), and due to the fact that the cardinality of $\mathbb{Z}_2^{\oplus n}$ is $2^{n}$, $K$ must have cardinality equal to $2^{p}$ for some $p\leq n$.
We also know that its non zero elements (which are also elements of $\mathbb{Z}_2^{\oplus n}$) have order $2$. 
These two facts allow us to conclude that $K \cong \mathbb{Z}_2^{\oplus p} $. Indeed, one can show that, if $S$ is a maximal set of elements which are linearly independent in $K$, then $\lvert S \rvert = p$. Moreover, given any such a set $\{x_1 , \dots , x_p \}$, $K \cong \underset{i=1 , \dots , p}{\oplus} \langle x_i \rangle \cong \mathbb{Z}_2^{\oplus p}$.
\end{proof}

\begin{Lemma}\label{cokernels}
The category $\mathbb{G}$ has cokernels.
\end{Lemma}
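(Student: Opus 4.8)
The plan is to use once more that $\mathbb{G}$ is a full subcategory of $\Cat{Ab}$: as recorded just before Lemma~\ref{kernels}, $\mathbb{G}$ has cokernels provided that for every morphism $f\colon\mathbb{Z}_2^{\oplus n}\to\mathbb{Z}_2^{\oplus m}$ of $\mathbb{G}$ a cokernel of $f$ formed in $\Cat{Ab}$, namely the quotient $\mathbb{Z}_2^{\oplus m}/\im f$, again lies in $\mathbb{G}$. So the task reduces to showing that this quotient is isomorphic to $\mathbb{Z}_2^{\oplus q}$ for some $q\leq m$.

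First I would note that $\im f=\{f(u):u\in\mathbb{Z}_2^{\oplus n}\}$ is the image of a finite set with decidable equality under a map into $\mathbb{Z}_2^{\oplus m}$, the latter also having decidable equality; hence $\im f$ is a finitely enumerable, \emph{detachable} subgroup of $\mathbb{Z}_2^{\oplus m}$. Consequently the coset equivalence relation is decidable, so $Q\coloneqq\mathbb{Z}_2^{\oplus m}/\im f$ is itself a finite set with decidable equality, and by the constructive form of Lagrange's Theorem already used in Lemma~\ref{kernels} one has $\lvert Q\rvert=2^{m}/\lvert\im f\rvert\leq 2^{m}$. Moreover every non-zero element of $Q$ has order $2$, since $2\bar v=\overline{2v}=\bar 0$. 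From here the argument in the proof of Lemma~\ref{kernels} applies with only cosmetic changes: a finite abelian group all of whose non-zero elements have order $2$ admits a maximal linearly independent subset $\{x_1,\dots,x_q\}$ (obtainable by a finite search, since linear independence over $\mathbb{Z}_2$ is decidable on a finite set), which spans by maximality, so that $Q\cong\bigoplus_{i}\langle x_i\rangle\cong\mathbb{Z}_2^{\oplus q}$; and then $2^{q}=\lvert Q\rvert\leq 2^{m}$ forces $q\leq m$, so $Q\in\mathbb{G}$ as required. (Alternatively, one may extend a basis of $\im f$ to a basis $\{x_1,\dots,x_p,y_1,\dots,y_q\}$ of $\mathbb{Z}_2^{\oplus m}$ by a terminating greedy procedure and verify that the classes $\bar y_1,\dots,\bar y_q$ form a basis of $Q$.)

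The only real work is the constructive bookkeeping, which is essentially the dual of what was done for kernels: one must check that $\im f$ is genuinely detachable in $\mathbb{Z}_2^{\oplus m}$, that $Q$ is a finite set with decidable equality (so that spans of finite subsets, linear (in)dependence, and hence maximal linearly independent subsets are all decidable and the relevant finite searches terminate), and that the counting/Lagrange steps are available constructively. Each of these holds because $\mathbb{Z}_2$ is a finite set with decidable equality and this property is inherited by finite powers, by images, and by quotients modulo detachable subgroups; so no constructive difficulty arises beyond those already handled in Lemma~\ref{kernels}.
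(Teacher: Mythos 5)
Your proof is correct and follows essentially the same route as the paper: identify the cokernel in $\Cat{Ab}$ with $\mathbb{Z}_2^{\oplus m}/\im f$, use the (constructive) Lagrange/cardinality count together with the fact that every non-zero element of the quotient has order $2$, and then reuse the maximal-linearly-independent-set argument of Lemma~\ref{kernels} to conclude the quotient is $\mathbb{Z}_2^{\oplus q}$ with $q\leq m$. The extra bookkeeping you supply (detachability of $\im f$ and decidable equality of the quotient) is a welcome elaboration of details the paper leaves implicit, but it does not change the argument.
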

\begin{proof}
In order to prove that the category $\mathbb{G}$ has cokernels, it is enough to show that, for any given map $f : \mathbb{Z}_2 ^{\oplus n} \rightarrow \mathbb{Z}_2 ^{\oplus m}$, a cokernel of this map in $\Cat{Ab}$ is isomorphic to an object of $\mathbb{G}$.
Consider the diagram 
\[\xymatrix{ \mathbb{Z}_2 ^{\oplus n}\ar[rr]^f\ar@{->>}[dr]^e & & \mathbb{Z}_2 ^{\oplus m} \ar@{->>}[rr]^q & & \mathbb{Z}_2 ^{\oplus m} / \Image f  \\
& \Image f \ar[ur]^j & & & }\]
where $q$ is a cokernel of $f$ in $\Cat{Ab}$.
From \ref{kernels} it follows that $\Image f $, being a subgroup of $\mathbb{Z}_2 ^ {\oplus m}$, is isomorphic to $\mathbb{Z}_2 ^{\oplus p}$ for some $0 \leq p \leq m$, and its cardinality is $2^p$. Hence, $\mathbb{Z}_2 ^{\oplus m} / \Image f $ has cardinality $2^{m-p}$. Moreover, every non zero element of $\mathbb{Z}_2 ^{\oplus m} / \Image f $ has order $2$. As in Lemma \ref{kernels}, we can conclude that $ \mathbb{Z}_2 ^{\oplus m} / \Image f  \cong \mathbb{Z}_2^{\oplus m - p}$.
\end{proof}

\begin{Prop}
The full subcategory $\mathbb{G}$ of $\Cat{Ab}$, whose objects are the finite direct powers of $\mathbb{Z}_2$, is an abelian category.
\end{Prop}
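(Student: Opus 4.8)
The plan is to verify, one axiom at a time, that $\mathbb{G}$ is abelian, using throughout that $\mathbb{G}$ is a \emph{full} subcategory of $\Cat{Ab}$: this means that whenever a (co)limit of a diagram in $\mathbb{G}$ happens to be representable by an object of $\mathbb{G}$, it is computed there exactly as it is in $\Cat{Ab}$.

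First I would record that $\mathbb{G}$ is preadditive: being full in $\Cat{Ab}$, each hom-set $\Hom_{\mathbb{G}}(\mathbb{Z}_2^{\oplus n},\mathbb{Z}_2^{\oplus m})$ is the abelian group it is in $\Cat{Ab}$, and composition is $\mathbb{Z}$-bilinear. Next, $\mathbb{Z}_2^{\oplus 0}=\{0\}$ is a zero object, and for all $n,m$ the biproduct $\mathbb{Z}_2^{\oplus n}\oplus\mathbb{Z}_2^{\oplus m}$ formed in $\Cat{Ab}$ is isomorphic to $\mathbb{Z}_2^{\oplus(n+m)}\in\mathbb{G}$, with the ambient injections and projections; hence $\mathbb{G}$ has a zero object and finite biproducts, so it is additive. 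Kernels and cokernels are supplied by Lemmas \ref{kernels} and \ref{cokernels}, and, again by fullness, a kernel (respectively cokernel) of a map $f$ in $\mathbb{G}$ is simultaneously a kernel (respectively cokernel) of $f$ in $\Cat{Ab}$.

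It then remains to check the last axiom: in the additive category $\mathbb{G}$, which has kernels and cokernels, every monomorphism should be a kernel and every epimorphism a cokernel — equivalently, for each $f\colon \mathbb{Z}_2^{\oplus n}\to\mathbb{Z}_2^{\oplus m}$ the canonical comparison morphism $\mathrm{coim}\,f\to\Image f$ should be an isomorphism. Here $\Image f=\ker(\mathrm{coker}\,f)$ and $\mathrm{coim}\,f=\mathrm{coker}(\ker f)$; by the previous paragraph both of these objects, and the comparison morphism between them, live in $\mathbb{G}$ and coincide with their counterparts in $\Cat{Ab}$. So it suffices to know that $\Cat{Ab}$ is abelian in $\mathbf{CZF}$, and the relevant fact — that the arrow $A/\ker f\to f(A)$ induced by $a+\ker f\mapsto f(a)$ is a bijection, hence a group isomorphism — goes through verbatim constructively, since $\ker f$, the subgroup $f(A)=\{\,b\mid\exists a,\ f(a)=b\,\}$, and the quotient are all given by bounded formulas. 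Thus the comparison morphism is an isomorphism in $\Cat{Ab}$, and therefore in the full subcategory $\mathbb{G}$.

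In this sense the proposition is essentially a transfer of abelian-ness from $\Cat{Ab}$, and the only point genuinely requiring work — the reason Lemmas \ref{kernels} and \ref{cokernels} were established first — is the closure of $\mathbb{G}$ under the formation of kernels, cokernels, images and coimages. Once that is in hand no new verification is needed, and the constructive content has already been isolated: it reduces to the (constructively valid) fact that $\Cat{Ab}$ is abelian, together with Lemmas \ref{kernels} and \ref{cokernels}.
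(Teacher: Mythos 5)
Your proposal is correct and follows essentially the same route as the paper: both transfer abelian-ness from $\Cat{Ab}$ using fullness together with Lemmas \ref{kernels} and \ref{cokernels}, which are what guarantee (rather than fullness alone) that kernels and cokernels computed in $\Cat{Ab}$ land in $\mathbb{G}$ and hence agree with those in $\mathbb{G}$. The only difference is cosmetic: you verify the final axiom via the comparison map $\mathrm{coim}\,f \to \Image f$ and the constructively valid first isomorphism theorem in $\Cat{Ab}$, while the paper directly checks that each monomorphism is the kernel of its cokernel (and dually), inheriting the universal property from $\Cat{Ab}$.
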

\begin{proof}
$\mathbb{G}$ has zero object $\mathbb{Z}_2^{\oplus 0}$, kernels (see Lemma \ref{kernels}) and cokernels (see Lemma \ref{cokernels});
it also inherits biproducts from $\Cat{Ab}$, by identifying $(\mathbb{Z}_2^{\oplus n}) \oplus (\mathbb{Z}_2^{\oplus m})$ with $\mathbb{Z}_2^{\oplus n+m}$.
In order to be able to conclude that the category $\mathbb{G}$ is abelian, we need to verify that every monomorphism is a kernel and that every epimorphism is a cokernel; these properties are inherited from the category $\Cat{Ab}$, using the fullness of $\mathbb{G}$. 
Indeed, given a monomorphism $l : \mathbb{Z}_2^{\oplus n} \rightarrowtail \mathbb{Z}_2^{\oplus m} $ in $\mathbb{G}$, we can show that this is the kernel of its cokernel. From Lemma \ref{cokernels} we know that the cokernel of $l$ in $\mathbb{G}$ is some arrow $cokerl(l) : \mathbb{Z}_2^{\oplus m} \twoheadrightarrow \mathbb{Z}_2^{\oplus m-n}$.
Clearly, $coker(l)\cdot l = 0$; it remains to show that $l$ has the universal property: but this is inherited from the one in $\Cat{Ab}$.
In the same way, it is possible to show that every epimorphism is the cokernel of its kernel.
\end{proof}

\begin{Rem}
Note that the category $\mathbb{G}$ is strongly equivalent to the category\break $\FdMod{\mathbb{Z}_2}$ of finite-dimensional $\mathbb{Z}_2$-modules equipped with a chosen basis. Indeed, the functor $F: \mathbb{G} \rightarrow \FdMod{\mathbb{Z}_2}$, which sends $\mathbb{Z}_2^{\oplus n}$ to $\mathbb{Z}_2^{\oplus n}$ equipped with the obvious basis, and the functor $G : \FdMod{\mathbb{Z}_2} \rightarrow \mathbb{G}$, which sends $M$ equipped with a base $\mathcal{B}$ to $\mathbb{Z}_2^{\oplus \mid \mathcal{B} \mid}$, are quasi-inverses. 
\end{Rem}

\subsection{Constructive issues}

The aim of this subsection is to witness the constructive issues previously mentioned: the well-poweredness of $[\mathcal{A}, \Cat{Ab}]$ and the existence of enough injectives. Throughout all the subsection, $\mathbb{G}$ denotes the small abelian category of finite direct powers of $\mathbb{Z}_2$ introduced in Subsection \ref{G}.

In $\mathbf{CZF}$, the Axiom of Power Set implies the well-poweredness of $[\mathbb{G}, \Cat{Ab}]$. In fact, they are equivalent: we prove here that, in $\mathbf{CZF}$, the well-poweredness of $[\mathbb{G}, \Cat{Ab}]$ implies the Axiom of Power Set. 

\begin{Lemma}\label{subgroups}
For every $F, S\in [\mathbb{G}, \Cat{Ab}]$, $S$ is a subfunctor of $F$ if and only if $S(\mathbb{Z}_2)$ is isomorphic to a subgroup of $F(\mathbb{Z}_2)$.
\end{Lemma}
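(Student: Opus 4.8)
The plan is to exploit the fact, implicit in Subsection~\ref{G}, that an additive functor on $\mathbb{G}$ is completely controlled by its value at $\mathbb{Z}_2$ together with the observation that a homomorphism $\mathbb{Z}_2\to\mathbb{Z}_2$ is either $0$ or $\mathrm{id}$. First I would record the structural facts needed. Since $F$ (resp.\ $S$) is additive it preserves the finite biproduct $\mathbb{Z}_2^{\oplus n}=\bigoplus_i \mathbb{Z}_2$, so $\bigl(F(\mathbb{Z}_2^{\oplus n});\, F(m_i),\, F(\pi_i)\bigr)$ is a biproduct of $n$ copies of $F(\mathbb{Z}_2)$ in $\Cat{Ab}$; and for a map $f\colon\mathbb{Z}_2^{\oplus n}\to\mathbb{Z}_2^{\oplus m}$ with matrix $[f_{i,j}]$ one has $F(\pi_j)\circ F(f)\circ F(m_i)=F(f_{i,j})$, directly by functoriality applied to $f_{i,j}=\pi_j f m_i$. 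Because each $f_{i,j}$ is $0$ or $\mathrm{id}_{\mathbb{Z}_2}$ and $F$ is additive (so $F(0)=0$), $F(f_{i,j})$ is correspondingly $0$ or $\mathrm{id}_{F(\mathbb{Z}_2)}$; the same applies to $S$.

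For the forward implication, suppose $S$ is a subfunctor of $F$, i.e.\ there is a monomorphism $\eta\colon S\rightarrowtail F$ in $[\mathbb{G},\Cat{Ab}]$. Since $\Cat{Ab}$ is finitely complete, monomorphisms in $[\mathbb{G},\Cat{Ab}]$ are computed componentwise, so $\eta_{\mathbb{Z}_2}\colon S(\mathbb{Z}_2)\to F(\mathbb{Z}_2)$ is a monomorphism of abelian groups and hence $S(\mathbb{Z}_2)\cong\Image(\eta_{\mathbb{Z}_2})$, a subgroup of $F(\mathbb{Z}_2)$. (If one takes ``subfunctor'' in the strict sense $S(X)\subseteq F(X)$, this direction is immediate.) Nothing here uses excluded middle or choice.

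For the converse I would assume a monomorphism $\varphi\colon S(\mathbb{Z}_2)\rightarrowtail F(\mathbb{Z}_2)$ exhibiting $S(\mathbb{Z}_2)$ as (isomorphic to) a subgroup of $F(\mathbb{Z}_2)$, and build a monomorphism $\eta\colon S\to F$ by setting, via the biproduct structure above,
\[ \eta_{\mathbb{Z}_2^{\oplus n}}\;\coloneqq\;\sum_{i=1}^{n} F(m_i)\circ\varphi\circ S(\pi_i)\;\colon\; S(\mathbb{Z}_2^{\oplus n})\longrightarrow F(\mathbb{Z}_2^{\oplus n}). \]
Under the identifications $S(\mathbb{Z}_2^{\oplus n})\cong S(\mathbb{Z}_2)^{\oplus n}$ and $F(\mathbb{Z}_2^{\oplus n})\cong F(\mathbb{Z}_2)^{\oplus n}$ this is just $\varphi^{\oplus n}$, which is a monomorphism in $\Cat{Ab}$ (a finite biproduct of monomorphisms is injective by inspection), so each component of $\eta$ is mono. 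For naturality with respect to an arbitrary $f\colon\mathbb{Z}_2^{\oplus n}\to\mathbb{Z}_2^{\oplus m}$, I would pre-compose $\eta_{\mathbb{Z}_2^{\oplus m}}\circ S(f)$ and $F(f)\circ\eta_{\mathbb{Z}_2^{\oplus n}}$ with the injections $S(m_i)$ and post-compose with the projections $F(\pi_j)$; using $F(\pi_j m_k)=S(\pi_j m_k)=\delta_{jk}\,\mathrm{id}$, both sides collapse to $\varphi\circ S(f_{i,j})$ versus $F(f_{i,j})\circ\varphi$, which agree because $S(f_{i,j})$ and $F(f_{i,j})$ are the ``same'' map ($0$ in both cases, or $\mathrm{id}$ in both cases) and $\varphi\circ0=0=0\circ\varphi$, $\varphi\circ\mathrm{id}=\mathrm{id}\circ\varphi$. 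Since a morphism into a biproduct is determined by its components, $\eta$ is natural, and being componentwise mono it is a monomorphism; thus $S$ is a subfunctor of $F$.

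I do not expect a genuine obstacle here: the argument is pure bookkeeping about additive functors and biproducts. The one point that wants a word of care is naturality of the constructed $\eta$, and the plan above handles it precisely by routing it through additivity of $F$ and $S$ (which forces $F(f_{i,j})$ and $S(f_{i,j})$ to be literally $0$ or $\mathrm{id}$) rather than through any case split on the isomorphism $\varphi$ or on equality of objects of $\mathbb{G}$. This is exactly why the lemma is a safe, fully constructive building block for the well-poweredness counterexample that follows.
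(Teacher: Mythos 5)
Your proof is correct and follows essentially the same route as the paper: the forward direction via the image of the component at $\mathbb{Z}_2$, and the converse by exploiting that an additive functor on $\mathbb{G}$ (and a natural transformation between two such) is completely determined by its value at $\mathbb{Z}_2$, since each $f_{i,j}$ is $0$ or $\mathrm{id}_{\mathbb{Z}_2}$ and additivity forces $F(f_{i,j})$, $S(f_{i,j})$ to be correspondingly $0$ or the identity. Your explicit formula $\eta_{\mathbb{Z}_2^{\oplus n}}=\sum_i F(m_i)\circ\varphi\circ S(\pi_i)$ and the componentwise naturality check merely spell out the extension step that the paper states without detail, and the decidable case split on $f_{i,j}$ keeps everything constructive exactly as in the paper.
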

\begin{proof}
If $S$ is a subfunctor of $F$, then there exists a monomorphism $S(\mathbb{Z}_2) \stackrel{f}{\rightarrowtail} F(\mathbb{Z}_2)$. Hence $S(\mathbb{Z}_2) \cong \Image f \leq F(\mathbb{Z}_2)$.

To prove the other implication, note that any functor $F\in [\mathbb{G}, \Cat{Ab}]$ is uniquely determined on objects by its behaviour on $\mathbb{Z}_2 $.
Furthermore, looking at the diagram
\[ \xymatrix{  F(\mathbb{Z}_2^{\oplus n})\ar[rr]^{F(f)}\ar[d]^{\wr}_{\phi_n} & & F( \mathbb{Z}_2^{\oplus m} ) \ar[d]^{\wr}_{\phi_m} \\
F(\mathbb{Z}_2)^{\oplus n} \ar[rr]^{\phi_m\cdot F(f) \cdot \phi_n^{-1}} & & F(\mathbb{Z}_2)^{\oplus m}\ar@{->>}[d]^{\pi_j} \\
F(\mathbb{Z}_2) \ar[u]^{m_i}\ar[rr]^{F(f_{i,j})} & & F(\mathbb{Z}_2)}
\]
where $\phi_n$ and $\phi_m$ are the canonical isomorphisms, it is easy to see that $\phi_m\cdot F(f) \cdot \phi_n^{-1}$ (and hence also $F(f)$) is uniquely determined by the maps $F(f_{i,j}): F(\mathbb{Z}_2 ) \rightarrow F(\mathbb{Z}_2)$. Since $F$ is additive,
\[ f_{i,j} = id_{\mathbb{Z}_2} \Rightarrow F(f_{i,j}) = id_{F(\mathbb{Z}_2)} ; \ \ \ f_{i,j} = 0 \Rightarrow F(f_{i,j}) = 0 \]
Hence $F(f_{i,j}) = id_{F(\mathbb{Z}_2)}$ or $F(f_{i,j})=0$, and the behaviour of $F$ on $f$ is uniquely determined by $f$ itself.
It follows that, to define a functor in $[\mathbb{G}, \Cat{Ab}]$, it is enough to declare its image at $\mathbb{Z}_2$ and, to define a natural transformation between two functors of $[\mathbb{G}, \Cat{Ab}]$, it is enough to define its behaviour at $\mathbb{Z}_2$.

Now, to complete the proof note that, given any group $\overline{S}$ which admits a monomorphism $\overline{f}$ (not necessarily an inclusion) into $F(\mathbb{Z}_2)$, we can define the functor $S\in [\mathbb{G}, \Cat{Ab}]$ as $S(\mathbb{Z}_2) \coloneqq \overline{S}$, and the monomorphism $f_{.}: S \rightarrowtail F$ as the natural transformation which gives $\overline{f}$ when evaluated at $\mathbb{Z}_2$. 
\end{proof}

Using Lemma \ref{subgroups}, we can prove:

\begin{Prop}\label{well-poweredness}
If the category $[\mathbb{G},\Cat{Ab}]$ is well-powered, then the Axiom of Power Set holds.
\end{Prop}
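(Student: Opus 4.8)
The plan is to sharpen Lemma~\ref{subgroups} into a purely set-theoretic statement about subgroups of $\mathbb{Z}_2$-vector spaces, and then, given an arbitrary set $X$, to code $\mathcal{P}(X)$ into the subgroups of a single $\mathbb{Z}_2$-vector space built from $X$. For the first step I would use the observation made in the proof of Lemma~\ref{subgroups} that a functor in $[\mathbb{G},\Cat{Ab}]$, and a natural transformation between two such, is completely determined by its value at $\mathbb{Z}_2$; conversely, every $\mathbb{Z}_2$-vector space $V$ is realised as $F(\mathbb{Z}_2)$ for a suitable $F\in[\mathbb{G},\Cat{Ab}]$ (put $F(\mathbb{Z}_2^{\oplus n}):=V^{\oplus n}$ and let $F$ act on a matrix of maps by the corresponding $\mathbb{Z}_2$-linear matrix). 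Combined with Lemma~\ref{subgroups}, this puts the subobjects of $F$ in bijection with the subgroups of $F(\mathbb{Z}_2)$. Hence, if $[\mathbb{G},\Cat{Ab}]$ is well-powered, then for every $\mathbb{Z}_2$-vector space $V$ the subgroups of $V$ form a set.

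Now fix an arbitrary set $X$; the goal is to exhibit $\mathcal{P}(X)$ as a set. Let $V:=\mathbb{Z}_2^{(X)}$ be the free $\mathbb{Z}_2$-vector space on $X$, which exists as a set in $\mathbf{CZF}$ (for instance as a quotient of $\coprod_{n\in\mathbb{N}}X^{n}$), with unit $\eta\colon X\to V$, $x\mapsto e_x$; recall $\eta$ is injective. To each $A\subseteq X$ I associate the subgroup $H_A:=\langle e_a\mid a\in A\rangle\le V$, and to each subgroup $H\le V$ the set $r(H):=\{x\in X\mid e_x\in H\}$, which is a set by Restricted Separation. By the first step $\mathrm{Sub}(V)$ is a set, so by Replacement $R:=\{\,r(H)\mid H\in\mathrm{Sub}(V)\,\}$ is a set, and I claim $R=\mathcal{P}(X)$. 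One inclusion is trivial, and for the reverse it suffices to prove $r(H_A)=A$ for every $A\subseteq X$; the inclusion $A\subseteq r(H_A)$ is immediate.

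The hard part is the other inclusion, $e_x\in\langle e_a\mid a\in A\rangle\Rightarrow x\in A$: constructively one cannot simply read off the ``$e_x$-coordinate'' of an element by a characteristic function, since $X$ need not have decidable equality. I would argue by collapsing $A$ to a point. Let $q\colon X\to X/A$ be the quotient of $X$ that collapses $A$ to a single point $\star$ --- concretely $X/A:=(X+\mathbf{1})/{\approx}$, where $\approx$ identifies the point of $\mathbf{1}$ with each $a\in A$, so that $q(u)=\star\iff u\in A$, and no detachability of $A$ is required --- and extend it to the linear map $q_{*}\colon\mathbb{Z}_2^{(X)}\to\mathbb{Z}_2^{(X/A)}$; then $q_{*}(e_a)=e_{\star}$ for every $a\in A$. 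If $e_x=\sum_{i=1}^{k}e_{a_i}$ with $a_i\in A$, applying $q_{*}$ gives $e_{q(x)}=k\cdot e_{\star}$ in $\mathbb{Z}_2^{(X/A)}$; applying the augmentation $\varepsilon\colon\mathbb{Z}_2^{(X/A)}\to\mathbb{Z}_2$ (the linear map with $\varepsilon(e_z)=1$ for all $z$) then forces $1=k\bmod 2$, so $k$ is odd and hence $e_{q(x)}=e_{\star}$; injectivity of $\eta$ now gives $q(x)=\star$, i.e.\ $x\in A$. This establishes $r(H_A)=A$, so $\mathcal{P}(X)=R$ is a set, and since $X$ was arbitrary the Axiom of Power Set follows.

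I expect the main obstacle to be precisely this last implication, together with the discipline of proving it using only constructively legitimate ingredients: the free $\mathbb{Z}_2$-vector space on a set, the injectivity of its unit, the existence of the augmentation, and a ``collapse a subset to a point'' operation that does not presuppose the subset detachable. Everything else --- the reduction to subgroups of $\mathbb{Z}_2$-vector spaces and the verification that $\mathbb{Z}_2^{(X)}$, $X/A$, the $H_A$, the $r(H)$ and $\mathrm{Sub}(V)$ really are sets (via Restricted Separation, Strong Collection and Replacement) --- is routine.
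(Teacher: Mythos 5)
Your argument is correct in outline but takes a genuinely different route from the paper's. The paper instantiates Lemma~\ref{subgroups} only at the inclusion functor $\mathbb{G}\hookrightarrow\Cat{Ab}$, identifies $\Sub(F)$ with the collection of subgroups of $\mathbb{Z}_2$ --- essentially the power class of a singleton --- and then invokes the standard $\mathbf{CZF}$ fact that if the power class of a singleton is a set then, by Exponentiation (available via Subset Collection), $\mathcal{P}(X)$ is a set for every $X$. You instead realise every $\mathbb{Z}_2$-vector space $V$ as $F(\mathbb{Z}_2)$ for a suitable $F$, apply well-poweredness to $V=\mathbb{Z}_2^{(X)}$, and code $\mathcal{P}(X)$ directly into the subgroups of $V$ via $A\mapsto H_A$ and the retraction $r$. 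What each approach buys: yours avoids the Exponentiation-based passage from one power object to all of them and produces $\mathcal{P}(X)$ in a single step, at the price of the free-module machinery; the paper's proof is a few lines precisely because it outsources that passage.

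The one step you should not present as a mere recollection is the injectivity of the unit $\eta\colon Z\to\mathbb{Z}_2^{(Z)}$ for an arbitrary set $Z$ (you use it for $Z=X/A$). Constructively this is not free: the usual proof reads off the coordinate at a basis element, which needs decidable equality on $Z$, and in fact injectivity of $\eta$ is exactly the instance $A=\{v\}$ of your hard inclusion $r(H_A)\subseteq A$ (namely $e_u\in\langle e_v\rangle\Rightarrow u=v$), so quoting it without proof comes close to assuming what you are proving. It is, however, provable in $\mathbf{CZF}$: in the word model, $\mathbb{Z}_2^{(Z)}$ is $\mathrm{List}(Z)$ modulo the congruence generated by adjacent swaps and cancellation of adjacent equal pairs, and one shows by induction along the finite chain of elementary moves that any word equivalent to the one-letter word $(u)$ admits a partition of its positions into one singleton carrying the letter $u$ and two-element blocks carrying equal letters; applied to $(v)$ this gives $u=v$, with no appeal to decidable equality. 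With that lemma stated and proved (it is the only place where the absence of decidable equality genuinely bites), the rest of your argument --- realising $V$ as $F(\mathbb{Z}_2)$, the sethood of $\mathbb{Z}_2^{(X)}$, $H_A$, $r(H)$ and $R$ via Restricted Separation and Strong Collection/Replacement, the collapse $q$ with $q(x)=\star\Leftrightarrow x\in A$, and the augmentation step forcing $k$ odd --- is sound and goes through in $\mathbf{CZF}$.
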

\begin{proof}
Consider the functor $F$ which acts as the inclusion of $\mathbb{G}$ into $\Cat{Ab}$. From the discussion above, we see that the collection $\Sub(F)$ of all subobjects (in this case isomorphic classes of subfunctors) of $F$ can be identified with the collection of all subgroups of $\mathbb{Z}_2$ in $\Cat{Ab}$.
Hence, if $\Sub(F)$ is a set, then the collection \{subgroups  of \ $\mathbb{Z}_2$\} is a set too.
This clearly implies that the power set of the singleton is a set, which is equivalent (assuming the Axiom of Exponentiation) to have the full Axiom of Power Set.
\end{proof}

Injectivity and the existence of enough injectives are defined constructively just like classically:
\begin{Def}
An object $I$ of a category $\mathcal{C}$ is called injective if, given any monomorphism $f: A \rightarrowtail B$ of $\mathcal{C}$ and any map $h : A \rightarrow I$, there exists a map $g : B \rightarrow I$ such that $g\cdot f = h$.

We say that $\mathcal{C}$ has enough injectives if, for every object $A$ of $\mathcal{C}$, there exists a monomorphism from $A$ into an injective object of $\mathcal{C}$.
\end{Def}

In the following, we are considering injectives which are decidable-valued functors.
\begin{Def}
A functor whose target category is concrete is said to be a decidable-valued functor if it is valued in objects whose underlying sets have decidable equalities.
\end{Def}

We are now going to show that, in $[\mathbb{G}, \Cat{Ab}]$, the existence of enough decidable-valued injectives implies the Weak Law of Excluded Middle (WLEM) for restricted formulas. Analogous results for sets have been given in \cite{Aczel-etc}.

\begin{Prop}\label{enough-injectives}
If the category $[\mathbb{G}, \Cat{Ab}]$ has enough decidable-valued injectives, then WLEM holds.
\end{Prop}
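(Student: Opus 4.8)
The plan is to turn the hypothesis into a decision procedure: for an arbitrary restricted proposition $\phi$ I would manufacture an object of $[\mathbb{G},\Cat{Ab}]$ whose value at $\mathbb{Z}_2$ ``encodes'' $\phi$, push it into a decidable‑valued functor supplied by the hypothesis, and read $\neg\phi\vee\neg\neg\phi$ off the target, whose equality is decidable.

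First I would fix a restricted proposition $\phi$ and form $V_\phi\coloneqq\{x\in\mathbb{Z}_2: x=0\lor\neg\phi\}$. By Restricted Separation this is a set, and one checks at once that it is a subgroup of $\mathbb{Z}_2$, with the crucial feature that $1\in V_\phi$ if and only if $\neg\phi$. The quotient $\mathbb{Z}_2/V_\phi$ is again a $\mathbb{Z}_2$‑module, so the recipe recorded after Lemma \ref{subgroups} (a functor in $[\mathbb{G},\Cat{Ab}]$ is freely determined by its value at $\mathbb{Z}_2$) produces a functor $W_\phi\in[\mathbb{G},\Cat{Ab}]$ with $W_\phi(\mathbb{Z}_2)=\mathbb{Z}_2/V_\phi$. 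Writing $q\colon\mathbb{Z}_2\twoheadrightarrow\mathbb{Z}_2/V_\phi$ for the quotient map and $[1]\coloneqq q(1)$, the point is the equivalence: $[1]=0$ in $\mathbb{Z}_2/V_\phi$ iff $1\in V_\phi$ iff $\neg\phi$.

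Next I would apply the hypothesis to $W_\phi$: there is a monomorphism $m\colon W_\phi\rightarrowtail I$ into an injective decidable‑valued functor $I$. Monomorphisms in $[\mathbb{G},\Cat{Ab}]$ are computed pointwise (and for abelian groups ``mono'' means ``injective'', constructively), so $m(\mathbb{Z}_2)\colon\mathbb{Z}_2/V_\phi\rightarrowtail I(\mathbb{Z}_2)$ is injective into a group with decidable equality. Now set $v\coloneqq m(\mathbb{Z}_2)([1])$ and decide whether $v=0$. If $v=0$, then $m(\mathbb{Z}_2)([1])=m(\mathbb{Z}_2)(0)$, so $[1]=0$ by injectivity, hence $\neg\phi$. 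If $v\neq0$, assume $\neg\phi$ for contradiction; then $[1]=0$, so $v=m(\mathbb{Z}_2)(0)=0$, contradicting $v\neq0$, hence $\neg\neg\phi$. Either way $\neg\phi\vee\neg\neg\phi$ holds; as $\phi$ was an arbitrary restricted formula, this is WLEM.

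The step I expect to need the most care is the \emph{choice} of auxiliary functor. The naive attempts — taking the subfunctor with value $U_\phi=\{x\in\mathbb{Z}_2: x=0\lor\phi\}$, or the quotient $\mathbb{Z}_2/U_\phi$, and extending maps along inclusions via injectivity — all leave the ``$v\neq0$'' branch uninformative, since when $\phi$ fails the relevant subobject collapses to $0$ and the extension is unconstrained. Routing $\phi$ through $V_\phi$, so that the collapse $\mathbb{Z}_2/V_\phi=0$ occurs \emph{exactly} under $\neg\phi$, is what makes both branches of the case split productive. The remaining verifications are routine (that $V_\phi$ is a subgroup, that $W_\phi$ is a genuine additive functor, and that monomorphisms are pointwise); I note in passing that the argument really uses only the decidable‑valuedness of $I$, injectivity merely guaranteeing that the hypothesis furnishes such an embedding.
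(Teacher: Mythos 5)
Your proof is correct, but it is not the paper's proof. The paper encodes the restricted formula $\phi$ into the subsingleton $p=\{x\in\mathbb{N}\mid (x=0)\wedge\phi\}$, forms the monomorphism $\Hom(\mathbb{Z}_2,-)^{\oplus\{\bot,\top\}}\rightarrowtail\Hom(\mathbb{Z}_2,-)^{\oplus\{\bot,\top,p\}}$, and uses the \emph{injectivity} of $I$ to extend the given embedding $f_{.}$ along it; WLEM is then read off by comparing $y=g_{\mathbb{Z}_2}(0_{\bot},0_{\top},id_p)$ with $f_{\mathbb{Z}_2}(id_{\bot},0_{\top})$ inside the decidable-equality group $I(\mathbb{Z}_2)$. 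You instead encode $\neg\phi$ into the quotient $\mathbb{Z}_2/V_\phi$ and use only the existence of a monomorphism from $W_\phi$ into a decidable-valued object; the extension property of $I$ plays no role at all, as you yourself observe. Your route is constructively sound: $V_\phi$ is a set by Restricted Separation and is a subgroup, $\mathbb{Z}_2/V_\phi$ is $2$-torsion, so the recipe extracted from the proof of Lemma \ref{subgroups} really does yield an additive functor (note that the value at $\mathbb{Z}_2$ must be killed by $2$ for functoriality, which holds here), and monomorphisms in $[\mathbb{G},\Cat{Ab}]$ are pointwise injective (via pointwise kernels or the additive Yoneda lemma), all of which goes through in $\mathbf{CZF}$. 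What your approach buys is a more elementary argument and a formally stronger conclusion: already ``every object of $[\mathbb{G},\Cat{Ab}]$ embeds into a decidable-valued object'' implies WLEM. What the paper's template buys is reusability in situations where the map into the candidate injective is fixed in advance rather than supplied by an ``enough injectives'' hypothesis, as in Lemma \ref{enough-injectives-groups} and Corollary \ref{cor2.12}, where the injectivity of the target ($\mathbb{Q}$ or $\mathbb{Q}/\mathbb{Z}$) is precisely what produces the needed extension.
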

\begin{proof}
Fix a formula $\phi$ with only bounded quantifiers, and define the set
\begin{equation} \label{p}
p \coloneqq \{ x\in\mathbb{N} \mid (x=0)\wedge \phi \}
\end{equation}

By identifying $\bot$ with the empty set and $\top$ with the singleton $\{ 0 \}$, we clarly have 
\[\neg \phi \Leftrightarrow p= \bot \ \ \text{and} \ \ \neg\neg \phi \Leftrightarrow p \neq \bot \]
and so 
\begin{equation} \label{p-wlem}
\neg \phi \vee \neg \neg \phi \Leftrightarrow p= \bot \vee p\neq \bot
\end{equation} 
For a given set $J$, let $A^{\oplus J}$ be the external direct sum defined as in \cite{Richman}.

Consider the monomorphism in $[\mathbb{G},\Cat{Ab}]$
\[ l_{.}: \Hom(\mathbb{Z}_2 , - )^{\oplus \{ \bot , \top \} } \rightarrowtail \Hom(\mathbb{Z}_2 , - )^{\oplus \{ \bot , \top, p \} } \]
whose component at $\mathbb{Z}_2^{\oplus n}$ acts as 
\[ l_{\mathbb{Z}_2^{\oplus n}} : \Hom(\mathbb{Z}_2 , \mathbb{Z}_2^{\oplus n} )^{\oplus \{ \bot , \top \} } \rightarrowtail \Hom(\mathbb{Z}_2 , \mathbb{Z}_2^{\oplus n} )^{\oplus \{ \bot , \top, p \} } \]
\[ (\alpha_{\bot}, \alpha_{\top}) \longmapsto (\alpha_{\bot}, \alpha_{\top}, 0_p) \]
where, in the indices, we keep track of the generator involved (e.g.\ $\alpha_{\bot}$ indicates that we take the map $\alpha$ in the hom-set corresponding to the generator $\bot$). 
Note that, if $p=\bot$ or if $p=\top$, then $l_{.}$ turns out to be the identity map.

Assuming that the category $[\mathbb{G},\Cat{Ab}]$ has enough injectives, then there exists a monomorphism $f_{.}: \Hom(\mathbb{Z}_2 , - )^{\oplus \{ \bot , \top \} } \rightarrowtail I$, where $I$ is an injective object of $[\mathbb{G}, \Cat{Ab}]$. 

This implies that we have a commutative diagram of the shape

\[ \xymatrix{ I & &\\
\Hom(\mathbb{Z}_2 , - )^{\oplus \{ \bot , \top \} } \ar[u]^{f_{.}}\ar[rr]_-{l_{.}} & & \Hom(\mathbb{Z}_2 , - )^{\oplus \{ \bot , \top, p \} } \ar[ull]_-{g_{.}} } \]

that, evaluated at $\mathbb{Z}_2$, gives the commutative diagram of abelian groups:

\[ \xymatrix{ I(\mathbb{Z}_2) & &\\
\Hom(\mathbb{Z}_2 , \mathbb{Z}_2 )^{\oplus \{ \bot , \top \} } \ar[u]^{f_{\mathbb{Z}_2}}\ar[rr]_-{l_{\mathbb{Z}_2}} & & \Hom(\mathbb{Z}_2 , \mathbb{Z}_2 )^{\oplus \{ \bot , \top, p \} } \ar[ull]_-{g_{\mathbb{Z}_2}}}\]

Call $y \coloneqq g_{\mathbb{Z}_2} (0_{\bot},0_{\top},id_p))$, and consider $p$. If $p=0$, then $l_{\mathbb{Z}_2}$ is the identity, and $y = f_{\mathbb{Z}_2}( id_{\bot},0_{\top})$. Suppose now that $y=f_{\mathbb{Z}_2}( id_{\bot},0_{\top})$; if $p=\top$, using the fact that $f_{.}$ is pointwise a monomorphism, $y = f_{\mathbb{Z}_2} (0_{\bot},id_{\top}) \neq f_{\mathbb{Z}_2}(id_{\bot} , 0_{\top})$; hence $p$ is not inhabited, and therefore $ p = \bot$.

Thus, we have 
 \[ p = \bot \Leftrightarrow y=f_{\mathbb{Z}_2}( id_{\bot}, 0_{\top}) \]
from which
\[ \neg \phi \vee \neg \neg \phi \Leftrightarrow p= \bot \vee p\neq \bot \Leftrightarrow y=f_{\mathbb{Z}_2}( id_{\bot}, 0_{\top}) \vee y\neq f_{\mathbb{Z}_2}( id_{\bot}, 0_{\top}) \]

Then, as long as $y=f_{\mathbb{Z}_2}( id_{\bot}, 0_{\top}) \vee y\neq f_{\mathbb{Z}_2}( id_{\bot}, 0_{\top})$ is true, we are done. This is the case, for instance, if the abelian group $I(\mathbb{Z}_2)$ has decidable equality.
\end{proof}

The discussion of this subsection can be summarized with the following results.
\begin{Th} \leavevmode
\begin{enumerate}
    \item If, for all small abelian categories $\mathcal{A}$, $[\mathcal{A}, \Cat{Ab}]$ is well-powered, then the Axiom of Power Set holds.
    \item If, for all small abelian categories $\mathcal{A}$, $[\mathcal{A}, \Cat{Ab}]$ has enough decidable-valued injectives, then WLEM holds.
\end{enumerate}
\end{Th}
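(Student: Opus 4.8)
The plan is to deduce both statements from the two Propositions already established for the specific category $\mathbb{G}$, by instantiating the universal quantifier over small abelian categories at $\mathcal{A} = \mathbb{G}$. For part (1), I would first recall that $\mathbb{G}$, the full subcategory of $\Cat{Ab}$ on the finite direct powers of $\mathbb{Z}_2$, is small and abelian (Subsection \ref{G}, together with Lemmas \ref{kernels} and \ref{cokernels}). Hence the hypothesis that $[\mathcal{A}, \Cat{Ab}]$ is well-powered for every small abelian $\mathcal{A}$ applies in particular to $\mathcal{A} = \mathbb{G}$, so $[\mathbb{G}, \Cat{Ab}]$ is well-powered, and Proposition \ref{well-poweredness} then yields the Axiom of Power Set directly.

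For part (2), the same specialization works: taking $\mathcal{A} = \mathbb{G}$, the hypothesis gives that $[\mathbb{G}, \Cat{Ab}]$ has enough decidable-valued injectives, and Proposition \ref{enough-injectives} produces WLEM for restricted formulas. Note that the hypothesis of part (2) is phrased exactly so as to match the hypothesis of Proposition \ref{enough-injectives} (the decidable-valuedness is what makes the final dichotomy $y = f_{\mathbb{Z}_2}(id_{\bot}, 0_{\top}) \vee y \neq f_{\mathbb{Z}_2}(id_{\bot}, 0_{\top})$ available), so nothing extra is needed.

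The only point requiring a moment's care — and which I regard as the ``main obstacle'', although it is a minor one — is to confirm that $\mathbb{G}$ genuinely falls within the scope of the quantifier over small abelian categories in $\mathbf{CZF}$: that is, that $\mathbb{G}$ is a legitimate small category, that its kernels and cokernels exist constructively, and that every mono is a kernel and every epi a cokernel. All of this has already been verified earlier in the section. Consequently the theorem is essentially a uniform restatement of Propositions \ref{well-poweredness} and \ref{enough-injectives}, and its proof is a two-line instantiation.
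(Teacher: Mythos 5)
Your proposal is correct and follows exactly the paper's own argument: the theorem is proved by instantiating the quantifier at $\mathcal{A} = \mathbb{G}$ (already shown to be a small abelian category) and invoking Propositions \ref{well-poweredness} and \ref{enough-injectives}. The extra care you take to note that $\mathbb{G}$ falls within the scope of the quantifier is a reasonable elaboration of what the paper leaves implicit, but it is the same proof.
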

\begin{proof}
By Propositions \ref{well-poweredness} and \ref{enough-injectives}, the category $\mathbb{G}$ of finite direct powers of $\mathbb{Z}_2$ shows both these implications. 
\end{proof}

\begin{Cor}
\leavevmode
\begin{enumerate}
    \item The statement ``for every small abelian category $\mathcal{A}$, $[\mathcal{A}, \Cat{Ab}]$ is well-powered'' is not provable in $\mathbf{CZF}$.
    \item The statement ``for every small abelian category $\mathcal{A}$, $[\mathcal{A}, \Cat{Ab}]$ has enough de\-cidable-valued injectives'' is not provable in $\mathbf{IZF}$, $\mathbf{IHOL}$, $\mathbf{CZF}$.
\end{enumerate}
\end{Cor}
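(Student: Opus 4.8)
The plan is to obtain both unprovability statements by contraposition, feeding the implications of the preceding Theorem (specialised to the category $\mathbb{G}$) into two standard independence facts; no further work with abelian categories is required, and the whole argument is a piece of metatheoretic bookkeeping.

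For part (1), I would argue as follows. Assume $\mathbf{CZF}$ proved that $[\mathcal{A},\Cat{Ab}]$ is well-powered for every small abelian $\mathcal{A}$. Specialising to $\mathbb{G}$ and rerunning Lemma \ref{subgroups} and Proposition \ref{well-poweredness} — all of which is carried out inside $\mathbf{CZF}$ using only bounded separation and the identification of $\Sub(F)$ with the collection of subgroups of $\mathbb{Z}_2$ — we would conclude in $\mathbf{CZF}$ that the collection of subgroups of $\mathbb{Z}_2$ is a set, whence $\mathcal{P}(\{0\})$ is a set, whence (using Exponentiation, which $\mathbf{CZF}$ has) the full Axiom of Power Set. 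But it is well known that $\mathbf{CZF}\not\vdash$ Power Set: adjoining Power Set strictly increases the proof-theoretic strength of $\mathbf{CZF}$, and the interpretation of $\mathbf{CZF}$ in Martin-L\"of type theory does not validate it (see \cite{CZF} and the references there). This contradiction establishes (1).

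For part (2), the same contrapositive scheme applies. Assume one of $\mathbf{CZF}$, $\mathbf{IZF}$, $\mathbf{IHOL}$ proved that $[\mathcal{A},\Cat{Ab}]$ has enough decidable-valued injectives for every small abelian $\mathcal{A}$ — for $\mathbf{IHOL}$, where the class quantifier over categories is awkward, it suffices to read the hypothesis at the single fixed small category $\mathbb{G}$, which is all Proposition \ref{enough-injectives} uses. The proof of Proposition \ref{enough-injectives} forms $p=\{x\in\mathbb{N}\mid (x=0)\wedge\phi\}$ by bounded separation, takes external direct sums of $\Hom(\mathbb{Z}_2,-)$ over the sets $\{\bot,\top\}$ and $\{\bot,\top,p\}$, invokes the chosen injective and lift, and finishes with elementary computations in $[\mathbb{G},\Cat{Ab}]$ and in $\Cat{Ab}$; none of this requires unbounded quantification, so the derivation goes through in each of the three theories. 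Hence that theory would prove $\neg\phi\vee\neg\neg\phi$ for every restricted $\phi$. But restricted WLEM is underivable in all three: a Heyting-valued / sheaf model over a non-Stone locale refutes it. For instance, in sheaves over $\mathbb{R}$ — which is a model of $\mathbf{IZF}$, hence of its subtheory $\mathbf{CZF}$, and also a model of $\mathbf{IHOL}$ — the generic Dedekind real $x$ makes $\phi\equiv(x>0)$ a restricted formula whose truth value is $(0,\infty)$, so that $\neg\phi$ has truth value $(-\infty,0)$, $\neg\neg\phi$ has truth value $(0,\infty)$, and $\neg\phi\vee\neg\neg\phi$ has truth value $\mathbb{R}\setminus\{0\}\ne\mathbb{R}$; the disjunction is therefore not valid. (The analogous implications at the level of sets appear in \cite{Aczel-etc}.) This contradiction establishes (2).

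The only genuinely delicate point is the verification, for part (2), that the proof of Proposition \ref{enough-injectives} really does avoid unbounded quantification, so that it is available in $\mathbf{IHOL}$ despite mentioning the functor category $[\mathbb{G},\Cat{Ab}]$ and direct sums indexed by the set $p$; this is exactly the caveat flagged in the Introduction, and is the step I would scrutinise most carefully. Everything else is a routine combination of the Theorem with the two cited independence results.
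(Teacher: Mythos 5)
Your proposal is correct and follows essentially the same route as the paper: the paper's own proof is just the contrapositive combination of the preceding Theorem (i.e.\ Propositions \ref{well-poweredness} and \ref{enough-injectives} applied to $\mathbb{G}$) with the known facts that Power Set is unprovable in $\mathbf{CZF}$ and WLEM is unprovable in $\mathbf{IZF}$, $\mathbf{IHOL}$ and $\mathbf{CZF}$, which the paper simply cites. The extra material you supply --- the sheaf-over-$\mathbb{R}$ refutation of restricted WLEM, the proof-theoretic reason Power Set fails in $\mathbf{CZF}$, and the check that Proposition \ref{enough-injectives} avoids unbounded quantification so that the argument is available in $\mathbf{IHOL}$ --- is sound detail that the paper leaves implicit rather than a different method.
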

\begin{proof}
From the fact that the Axiom of Power Set is not provable in $\mathbf{CZF}$, and that WLEM is not provable in $\mathbf{IZF}$, $\mathbf{IHOL}$ and $\mathbf{CZF}$
\end{proof}

\begin{Rem}
Even if Proposition \ref{enough-injectives}, considering only decidable-valued functors, gives a weak result, it is still of some interest, since WLEM is not derivable not only in $\mathbf{CZF}$, but in every constructive system (like, for instance, the Intuitionistic Zermelo--Fraenkel Set Theory $\mathbf{IZF}$). Proposition \ref{well-poweredness} gives a full result leading to the unprovability of the well-poweredness of $[\mathbb{G}, \Cat{Ab}]$ only in $\mathbf{CZF}$, since the Axiom of Power Sets is part of the theory of constructive systems different from $\mathbf{CZF}$.
\end{Rem}

\subsection{About the existence of injective abelian groups}
As we have shown that the existence of enough decidable-valued injectives in the category $[\mathbb{G}, \rm{Ab}]$ implies WLEM for restricted formulas, one can also show that the existence of enough injectives with decidable equalities in the category $\Cat{Ab}$ implies WLEM.

\begin{Lemma}\label{enough-injectives-groups}
If there exists a monomorphism $f$ in $\Cat{Ab}$ from $\mathbb{Z}^{\{ \bot , \top \} }$ into an injective object $I$ which has decidable equality, then WLEM holds. 
\end{Lemma}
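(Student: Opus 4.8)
The plan is to mimic the structure of the proof of Proposition \ref{enough-injectives}, but now working directly in $\Cat{Ab}$ with the group $\mathbb{Z}$ in place of $\Hom(\mathbb{Z}_2,-)$. Fix a restricted formula $\phi$ and form the set $p \coloneqq \{x\in\mathbb{N}\mid (x=0)\wedge\phi\}$ exactly as in \eqref{p}, so that $\neg\phi\Leftrightarrow p=\bot$ and $\neg\neg\phi\Leftrightarrow p\neq\bot$ by \eqref{p-wlem}. First I would set up the analogue of the monomorphism $l_{.}$: let
\[ l : \mathbb{Z}^{\{\bot,\top\}} \rightarrowtail \mathbb{Z}^{\{\bot,\top,p\}}, \qquad (a_\bot,a_\top)\longmapsto (a_\bot,a_\top,0_p), \]
the evident inclusion splitting off the coordinate indexed by $p$. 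As before, the key observation is that when $p=\bot$ (equivalently $\{\bot,\top,p\}=\{\bot,\top\}$ as a set) the map $l$ is the identity, and when $p=\top$ the codomain collapses to $\mathbb{Z}^{\{\bot,\top\}}$ again but now the element $(0_\bot,0_\top,1_p)$ is identified with $(0_\bot,1_\top)\neq(1_\bot,0_\top)$.

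Next, using the hypothesis, fix a monomorphism $f:\mathbb{Z}^{\{\bot,\top\}}\rightarrowtail I$ with $I$ injective and of decidable equality. By injectivity there is $g:\mathbb{Z}^{\{\bot,\top,p\}}\to I$ with $g\cdot l = f$. Put $y\coloneqq g(0_\bot,0_\top,1_p)\in I$. I would then argue, just as in Proposition \ref{enough-injectives}, that $p=\bot \Leftrightarrow y = f(1_\bot,0_\top)$: if $p=\bot$ then $l=\mathrm{id}$, so $g=f$ and $y=g(1_\bot,0_\top)=f(1_\bot,0_\top)$ (here $(0_\bot,0_\top,1_p)$ literally is $(1_\bot,0_\top)$ once the third coordinate is absorbed — one has to be slightly careful about how the external direct sum over $\{\bot,\top,p\}$ is presented when $p$ collapses to one of $\bot,\top$, but since $\mathbb{Z}$ is the summand in every coordinate this is handled exactly as in the proof of \ref{enough-injectives}); conversely, if $y=f(1_\bot,0_\top)$ then $p$ cannot be inhabited, because if $p=\top$ then $y=g(0_\bot,0_\top,1_p)=f(0_\bot,1_\top)$, and $f(0_\bot,1_\top)\neq f(1_\bot,0_\top)$ since $f$ is a monomorphism and $(0_\bot,1_\top)\neq(1_\bot,0_\top)$ in $\mathbb{Z}^{\{\bot,\top\}}$; hence $\neg(p \text{ inhabited})$, i.e.\ $p=\bot$. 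Finally, since $I(\mathbb{Z}_2)$ — here just $I$ — has decidable equality, we may decide $y=f(1_\bot,0_\top)$ or $y\neq f(1_\bot,0_\top)$, and by the chain of equivalences $p=\bot\vee p\neq\bot$, hence $\neg\phi\vee\neg\neg\phi$, which is WLEM for restricted formulas.

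The main obstacle I anticipate is purely bookkeeping: making precise the claim ``if $p=\bot$ then $l=\mathrm{id}$'' and ``if $p=\top$ then $(0_\bot,0_\top,1_p)=(0_\bot,1_\top)$'' in the constructive setting, where $p$ is a subsingleton rather than literally equal to $\emptyset$ or $\{0\}$, and where the external direct sum $A^{\oplus J}$ depends on the index set $J$. The clean way is to work with the external direct sum over the three-element family $(\mathbb{Z})_{i\in\{\bot,\top,p\}}$ and note that an element of it is a finitely-supported function; when $p$ happens to equal $\top$ (so $\{\bot,\top,p\}$ has two elements), the function previously supported at $p$ becomes supported at $\top$, and the identification is exactly the one used in Proposition \ref{enough-injectives}. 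Everything else is a transcription of that proof, so no further difficulty is expected.
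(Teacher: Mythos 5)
Your proposal is correct and follows essentially the same route as the paper's own proof: the same monomorphism $l$, the same extension $g$ obtained from injectivity, the same element $y \coloneqq g(0_{\bot},0_{\top},1_p)$, the same equivalence $p=\bot \Leftrightarrow y=f(1_{\bot},0_{\top})$ established via the two cases $p=\bot$ and $p=\top$, and the same final appeal to decidable equality of $I$ together with \eqref{p-wlem}. Your closing remark on the bookkeeping for the external direct sum when the index set collapses is a point the paper passes over silently, but it does not change the argument.
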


\begin{proof}
Given $p$ as in \ref{p}, consider the monomorphism
\[ l: \mathbb{Z}^{\oplus \{\bot,\top\}} \rightarrowtail \mathbb{Z}^{\oplus \{\bot,\top , p\}} \]
\[ (a_{\bot},b_{\top}) \longmapsto (a_{\bot},b_{\top},0_p) \]
then, using the injectivity of $I$, we get a commutative diagram of abelian groups
\[ \xymatrix{ I & \\
 \mathbb{Z}^{\oplus \{\bot,\top\}}\ar[u]^-{f}\ar[r]_-{l} & \mathbb{Z}^{\oplus \{\bot,\top , p\}}\ar[ul]_-{g} }\]
Note that, if $p=\bot$, then $g(0_{\bot}, 0_{\top}, 1_p) = f(1_{\bot} , 0_{\top})$. Suppose now that $g(0_{\bot}, 0_{\top}, 1_p) = f(1_{\bot} , 0_{\top})$: if $p = \top$, then $g(0_{\bot}, 0_{\top}, 1_p) = f(0_{\bot} , 1_{\top})$; but, since $f$ is injective, $f(0_{\bot} , 1_{\top}) \neq f(1_{\bot} , 0_{\top})$; therefore $p$ cannot be inhabited, and so $p = \bot$.

Hence, called $y \coloneqq g(0_{\bot}, 0_{\top}, 1_p)$,
\[ p=\bot \vee p \neq \bot \Leftrightarrow y = f(1_{\bot} , 0_{\top}) \vee y \neq f(1_{\bot} , 0_{\top}) \]

Then, as long as $y = f(1_{\bot} , 0_{\top}) \vee y \neq f(1_{\bot} , 0_{\top})$ is true, and using \ref{p-wlem}, we are done; if $I$ has decidable equality, this is the case.
\end{proof}

\begin{Cor}\label{e.i.WLEM}
The statement ``the category $\Cat{Ab}$ has enough injectives with decidable equalities'' implies WLEM.
\end{Cor}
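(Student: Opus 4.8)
The plan is to deduce this at once from Lemma \ref{enough-injectives-groups}, which already does all the work. Reading the hypothesis ``the category $\Cat{Ab}$ has enough injectives with decidable equalities'' in the natural strong sense — for every object $A$ of $\Cat{Ab}$ there is a monomorphism from $A$ into an injective object whose underlying set has decidable equality — I would simply instantiate it at the particular object $A \coloneqq \mathbb{Z}^{\oplus\{\bot,\top\}}$. This is a genuine set-sized abelian group: $\{\bot,\top\}$ is a set, and since $\bot\neq\top$ it has exactly two elements, so $A\cong\mathbb{Z}^2$; in particular the hypothesis does apply to it.

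From that instantiation one obtains a monomorphism $f : \mathbb{Z}^{\oplus\{\bot,\top\}}\rightarrowtail I$ in $\Cat{Ab}$ with $I$ injective and $I$ of decidable equality. This is precisely the hypothesis of Lemma \ref{enough-injectives-groups}, so WLEM follows directly, with no further argument.

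The only genuine ``obstacle'' here is the bookkeeping one of pinning down which reading of ``enough injectives with decidable equalities'' is intended; once that reading is fixed to require the embedding target to have decidable equality, the corollary is a bare specialisation of the lemma. It is worth noting in passing that this is the exact analogue, inside $\Cat{Ab}$ rather than $[\mathbb{G},\Cat{Ab}]$, of the device used in Proposition \ref{enough-injectives}: replacing $\Hom(\mathbb{Z}_2,-)^{\oplus\{\bot,\top\}}$ by $\mathbb{Z}^{\oplus\{\bot,\top\}}$ and the representable's injective hull by an injective hull in $\Cat{Ab}$, so the two unprovability results rest on one and the same mechanism.
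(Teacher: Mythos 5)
Your proof is correct and matches the paper's: the paper's proof of Corollary \ref{e.i.WLEM} is simply ``Straight from Lemma \ref{enough-injectives-groups}'', and what you spell out---instantiating the hypothesis at the concrete group $\mathbb{Z}^{\oplus\{\bot,\top\}}\cong\mathbb{Z}^{2}$ to obtain the monomorphism into an injective with decidable equality required by that lemma---is exactly the intended specialisation.
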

\begin{proof}
Straight from Lemma \ref{enough-injectives-groups}.
\end{proof}

\begin{Cor}\label{cor2.12}
The statement ``either $\mathbb{Q}$ or $\mathbb{Q}/ \mathbb{Z}$ is injective in $\Cat{Ab}$'' implies WLEM.
\end{Cor}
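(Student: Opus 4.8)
The approach I would take is to prove WLEM from each of the two disjuncts separately, and then finish by case distinction over the hypothesis. Both halves will be made to fit Lemma~\ref{enough-injectives-groups} (or an immediate variant of it): from the injectivity of a group $J$ I need to produce an \emph{injective} abelian group with \emph{decidable equality} receiving a monomorphism from two copies of a suitable ``test'' group. The two background facts I would use are that a finite direct sum of injective objects of $\Cat{Ab}$ is again injective (the usual argument — project to each summand, extend, recombine — is constructive and goes through in $\mathbf{CZF}$), and that decidable equality of underlying sets is preserved by finite products.

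First I would treat the case where $\mathbb{Q}$ is injective. Then $\mathbb{Q}^{\oplus\{\bot,\top\}}$, being a finite direct sum of copies of $\mathbb{Q}$, is injective; since $\mathbb{Q}$ has decidable equality, so does the finite product $\mathbb{Q}^{\oplus\{\bot,\top\}}$. The coordinatewise inclusion $\mathbb{Z}\hookrightarrow\mathbb{Q}$ induces a monomorphism $\mathbb{Z}^{\oplus\{\bot,\top\}}\rightarrowtail\mathbb{Q}^{\oplus\{\bot,\top\}}$, so Lemma~\ref{enough-injectives-groups} applies with $I\coloneqq\mathbb{Q}^{\oplus\{\bot,\top\}}$ and yields WLEM directly.

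Next I would treat the case where $\mathbb{Q}/\mathbb{Z}$ is injective. Here $\mathbb{Z}$ cannot be embedded into anything built from $\mathbb{Q}/\mathbb{Z}$, since all such groups are torsion, so Lemma~\ref{enough-injectives-groups} cannot be quoted verbatim; instead I would re-run its proof with $\mathbb{Z}_2$ in place of $\mathbb{Z}$. The map $1\mapsto\tfrac{1}{2}+\mathbb{Z}$ is a monomorphism $\mathbb{Z}_2\rightarrowtail\mathbb{Q}/\mathbb{Z}$, hence gives a monomorphism $f:\mathbb{Z}_2^{\oplus\{\bot,\top\}}\rightarrowtail(\mathbb{Q}/\mathbb{Z})^{\oplus\{\bot,\top\}}$ into an injective group with decidable equality (that $\mathbb{Q}/\mathbb{Z}$ has decidable equality amounts to the fact that one can decide, given rationals $q_1,q_2$, whether $q_1-q_2$ is an integer; finite products preserve this). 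With $p$ as in \ref{p}, one forms the monomorphism $l:\mathbb{Z}_2^{\oplus\{\bot,\top\}}\rightarrowtail\mathbb{Z}_2^{\oplus\{\bot,\top,p\}}$, $(a_\bot,b_\top)\mapsto(a_\bot,b_\top,0_p)$, extends $f$ along $l$ to some $g$ by injectivity, and sets $y\coloneqq g(0_\bot,0_\top,1_p)$. Exactly as in the proof of Lemma~\ref{enough-injectives-groups}: if $p=\bot$ then $l$ is the identity and $y=f(1_\bot,0_\top)$; conversely, if $y=f(1_\bot,0_\top)$ and $p$ were inhabited (so $p=\top$) one would get $y=f(0_\bot,1_\top)$, contradicting $f(1_\bot,0_\top)\neq f(0_\bot,1_\top)$ — which holds because $f$ is a monomorphism and $1\neq 0$ in $\mathbb{Z}_2$. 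Hence $p=\bot\Leftrightarrow y=f(1_\bot,0_\top)$, and since $(\mathbb{Q}/\mathbb{Z})^{\oplus\{\bot,\top\}}$ has decidable equality we get $p=\bot\vee p\neq\bot$, which by \ref{p-wlem} is WLEM.

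Since the hypothesis supplies the disjunction ``$\mathbb{Q}$ is injective or $\mathbb{Q}/\mathbb{Z}$ is injective'', a case split over it delivers WLEM unconditionally. I do not expect a genuine obstacle: the only mildly delicate points are checking that finite direct sums of injective abelian groups stay injective constructively, that $\mathbb{Q}$ and $\mathbb{Q}/\mathbb{Z}$ (hence their finite powers) have decidable equality, and that the proof of Lemma~\ref{enough-injectives-groups} survives replacing $\mathbb{Z}$ by $\mathbb{Z}_2$ — the last being automatic once one notices that the torsion-freeness of $\mathbb{Z}$ and the torsion-ness of $\mathbb{Q}/\mathbb{Z}$ are exactly what force the two cases to use different test groups. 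It would in fact be cleanest to isolate a single lemma generalising Lemma~\ref{enough-injectives-groups}: if some abelian group possessing a nonzero element admits, after taking two copies, a monomorphism into an injective object with decidable equality, then WLEM holds; Lemma~\ref{enough-injectives-groups} and the variant above are then just the instances $\mathbb{Z}$ and $\mathbb{Z}_2$.
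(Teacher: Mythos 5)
Your proposal is correct, but it takes a different route from the paper's own proof, which is shorter. The paper keeps the codomain as $\mathbb{Q}$ (resp.\ $\mathbb{Q}/\mathbb{Z}$) itself and instantiates the map $f$ of Lemma~\ref{enough-injectives-groups} with the map $\mathbb{Z}^{\oplus\{\bot,\top\}}\to\mathbb{Q}$ sending $(1_\bot,0_\top)\mapsto\frac{1}{2}$ and $(0_\bot,1_\top)\mapsto\frac{1}{3}$ (resp.\ their classes in $\mathbb{Q}/\mathbb{Z}$), observing that although this $f$ is \emph{not} a monomorphism, the proof of the lemma only ever uses the single inequality $f(1_\bot,0_\top)\neq f(0_\bot,1_\top)$, which holds since $\frac{1}{2}\neq\frac{1}{3}$ (also modulo $\mathbb{Z}$); decidability of equality in $\mathbb{Q}$ and $\mathbb{Q}/\mathbb{Z}$ then finishes the argument, and the torsion issue you worried about never arises because $\mathbb{Z}$ is never required to embed. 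You instead insist on a genuine monomorphism, which forces you to enlarge the target to $\mathbb{Q}^{\oplus\{\bot,\top\}}$, resp.\ $(\mathbb{Q}/\mathbb{Z})^{\oplus\{\bot,\top\}}$ (so you need the auxiliary, constructively unproblematic, fact that finite biproducts of injectives are injective and that finite products preserve decidable equality), and, for the torsion case, to swap the test group $\mathbb{Z}$ for $\mathbb{Z}_2$ and re-run the lemma's proof; all of these steps are sound, the $\mathbb{Z}_2$ variant goes through verbatim (the only property used is that $(1_\bot,0_\top)\neq(0_\bot,1_\top)$ maps to distinct elements under a mono), and the final case split over the disjunction is fine. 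What your version buys is that the first disjunct falls under the literal statement of Lemma~\ref{enough-injectives-groups} and that you isolate a cleaner reusable generalisation (any group with a nonzero element whose square embeds into an injective with decidable equality yields WLEM); what the paper's version buys is brevity — one copy of the group, no closure lemma, no change of test group — at the cost of the remark that the monomorphism hypothesis on $f$ can be dropped.
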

\begin{proof}
We instantiate the map $f$ of Lemma \ref{enough-injectives-groups} with the morphism sending $(1_{\bot} , 0_{\top})$ to $\frac{1}{2}$ and $(0_{\bot} , 1_{\top})$ to $\frac{1}{3}$ (resp.\ sending $(1_{\bot} , 0_{\top})$ to $[\frac{1}{2}]_{\mathbb{Z}}$ and $(0_{\bot} , 1_{\top})$ to $[\frac{1}{3}]_{\mathbb{Z}}$, where $[x]_{\mathbb{Z}}$ denotes the projection of $x$ onto $\mathbb{Q} / \mathbb{Z}$); even if this is not a monomorphism, the rest of the proof of Lemma \ref{enough-injectives-groups} still works. Since the equality of $\mathbb{Q}$ (resp.\ $\mathbb{Q} / \mathbb{Z}$) is decidable, we get that the injectivity of $\mathbb{Q}$ (resp.\ $\mathbb{Q}/ \mathbb{Z}$) implies WLEM.
\end{proof}

In the constructive approach, divisible groups are defined as follows:
\begin{Def}
An abelian group $G$ such that, for any $x\in G$, for any $n\in\mathbb{N}$, there exists $y\in G$ such that $n y = x$, is called divisible.
\end{Def}

\begin{Cor} 
The statement ``any divisible abelian group is injective'' implies WLEM.
\end{Cor}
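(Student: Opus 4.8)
The plan is to reduce the statement to Corollary \ref{cor2.12} by exhibiting a single concrete divisible abelian group whose equality is decidable, the obvious candidate being $\mathbb{Q}$ (the group $\mathbb{Q}/\mathbb{Z}$ would serve equally well).

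First I would check that $\mathbb{Q}$ is divisible in the sense of the definition just given, paying attention to constructiveness: for $x = a/b \in \mathbb{Q}$ and $n \geq 1$, the element $y \coloneqq a/(bn)$ satisfies $ny = x$, and since $y$ is produced by an explicit formula in $a$, $b$, $n$, the divisibility witness is obtained constructively. (The identical computation, applied to representatives, shows $\mathbb{Q}/\mathbb{Z}$ is divisible as well.) Then, assuming the hypothesis ``any divisible abelian group is injective'', we conclude in particular that $\mathbb{Q}$ is injective in $\Cat{Ab}$; a fortiori ``either $\mathbb{Q}$ or $\mathbb{Q}/\mathbb{Z}$ is injective in $\Cat{Ab}$'' holds, and Corollary \ref{cor2.12} then yields WLEM, completing the argument.

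I do not expect any genuine obstacle here, as the statement is essentially an immediate corollary of the material already proved; the only point demanding a moment's care is confirming that the divisibility witness for $\mathbb{Q}$ is constructive, which it visibly is. Should one prefer to avoid invoking Corollary \ref{cor2.12}, one can argue directly instead: $\mathbb{Q}$ has decidable equality, so an injective $\mathbb{Q}$ together with the map $f$ of Lemma \ref{enough-injectives-groups} sending $(1_\bot,0_\top) \mapsto \tfrac{1}{2}$ and $(0_\bot,1_\top)\mapsto \tfrac{1}{3}$ produces WLEM by the proof of that lemma.
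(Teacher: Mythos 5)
Your proof is correct and follows essentially the same route as the paper: the paper's proof likewise observes that the divisibility of $\mathbb{Q}$ (and $\mathbb{Q}/\mathbb{Z}$) holds constructively and then reduces directly to Corollary \ref{cor2.12}. Your explicit divisibility witness and the remark on the alternative direct use of Lemma \ref{enough-injectives-groups} are just slightly more detailed versions of the same argument.
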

\begin{proof}
Direct from the proof of Corollary \ref{cor2.12}, since the classical proofs of the divisibility of $\mathbb{Q}$ and of $\mathbb{Q}/\mathbb{Z}$ work constructively. 
\end{proof}

\begin{Cor}\label{abelian-czf}
\leavevmode
\begin{enumerate}
    \item The statement ``the category of abelian groups has enough injectives with decidable equalities'' is not provable in $\mathbf{IZF}$, $\mathbf{IHOL}$, $\mathbf{CZF}$.
    \item The statement ``either $\mathbb{Q}$ or $\mathbb{Q} / \mathbb{Z}$ is injective in $\Cat{Ab}$'' is not provable in $\mathbf{IZF}$, $\mathbf{IHOL}$, $\mathbf{CZF}$.
    \item The statement ``any divisible abelian group is injective'' is not provable in $\mathbf{IZF}$, $\mathbf{IHOL}$, $\mathbf{CZF}$.
\end{enumerate}
\end{Cor}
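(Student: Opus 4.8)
The plan is to obtain all three non-derivability statements at once, purely as a corollary of the implications already established above --- Corollary~\ref{e.i.WLEM}, Corollary~\ref{cor2.12}, and the corollary asserting that ``any divisible abelian group is injective'' implies WLEM --- together with the known fact that WLEM (for restricted formulas) is not a theorem of $\mathbf{IZF}$, $\mathbf{IHOL}$ or $\mathbf{CZF}$. So I would argue contrapositively: fix $T \in \{\mathbf{IZF}, \mathbf{IHOL}, \mathbf{CZF}\}$ and one of the three statements in question, and suppose it were provable in $T$. By the corresponding preceding corollary --- (1) matched with Corollary~\ref{e.i.WLEM}, (2) with Corollary~\ref{cor2.12}, (3) with the divisibility corollary --- that statement implies WLEM; hence $T$ would prove WLEM, which is impossible.

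Before that last step can be invoked, I would check that the derivations of WLEM in those corollaries are themselves available in each of $\mathbf{IZF}$, $\mathbf{IHOL}$ and $\mathbf{CZF}$. All of them are carried out inside $\mathbf{CZF}$ as stated, hence a fortiori inside $\mathbf{IZF}$; and each of the ingredients --- the set $p = \{x \in \mathbb{N} \mid (x=0) \wedge \phi\}$ formed by Restricted Separation, the external direct sums $\mathbb{Z}^{\oplus\{\bot,\top\}}$ and $\mathbb{Z}^{\oplus\{\bot,\top,p\}}$, the groups $\mathbb{Q}$ and $\mathbb{Q}/\mathbb{Z}$ with their constructively verified decidable equality, and the use of injectivity to complete the relevant commutative square --- involves only bounded quantification, so by the convention fixed in the introduction these arguments transfer to $\mathbf{IHOL}$ as well. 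This reduces everything to the single input: $T$ does not prove restricted WLEM.

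Finally I would invoke that input: there are models witnessing the failure of restricted WLEM over $\mathbf{IZF}$, over $\mathbf{IHOL}$ and over $\mathbf{CZF}$ (and, since $\mathbf{CZF}$ is contained in $\mathbf{IZF}$, an $\mathbf{IZF}$ witness also serves $\mathbf{CZF}$) --- this is precisely the ``known'' fact already quoted in the corollary after Proposition~\ref{enough-injectives}, with the set-theoretic cases going back to the analysis in \cite{Aczel-etc}. Combined with the contrapositive set up above, this yields all three unprovability claims. The hard part here is not conceptual but clerical: being scrupulous that no step of Corollaries~\ref{e.i.WLEM}--\ref{cor2.12} secretly uses an unbounded quantifier or a power-set instance that $\mathbf{IHOL}$ or $\mathbf{CZF}$ cannot supply, and pinning down a precise reference for the non-derivability of restricted WLEM in each of the three systems; that is where I would concentrate the actual writing.
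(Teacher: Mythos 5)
Your proposal is correct and matches the paper's own argument, which likewise deduces all three claims by combining the preceding corollaries (each statement implies WLEM) with the non-provability of WLEM in $\mathbf{IZF}$, $\mathbf{IHOL}$ and $\mathbf{CZF}$. Your additional care about transferring the WLEM derivations to $\mathbf{IHOL}$ via boundedness is exactly the convention the paper fixes in its introduction, so nothing new is needed.
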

\begin{proof}
From the fact that WLEM is not provable in $\mathbf{IZF}$, $\mathbf{IHOL}$ and $\mathbf{CZF}$.
\end{proof}

\begin{Rem}
Point (2) of \ref{abelian-czf} shows that the standard construction of enough injectives, performed using of the injectivity of $\mathbb{Q} / \mathbb{Z}$ (see Proposition \cite[I.8.3]{Hilton}), can't work constructively.
\end{Rem}

\begin{Rem}
It is already known that, in $\mathbf{ZFA}$ (i.e.\ the  Zermelo--Fraenkel set theory with atoms), the statement ``every divisible abelian group is injective'' is equivalent to the Axiom of Choice (see \cite[Theorem 2.1]{Blass}).
\end{Rem}

\begin{Rem}
One might ask if Corollary \ref{e.i.WLEM} can be strengthened, getting rid of the hypothesis on decidable equality or deriving LEM instead of WLEM. We point out that the statement ``for every ring $R$, $\Mod{R}$ has enough injectives'' holds in the stack semantics of any Grothendieck topos. Indeed, as shown in \cite[Theorem 01DU]{Stacks-project}, for every Grothendieck topos $\mathcal{E}$ and for every $R\in Ring(\mathcal{E})$, $\mathcal{E}$ has enough external injective $R$-modules, and, as stated after \cite[Theorem 3.8]{Blechschmidt}, any externally injective $R$-module is also internally injective, if $\mathcal{E}$ is supposed to have a natural number object. From this, one can derive that ``for every ring $R$, $\Mod{R}$ have enough injectives'' holds in the stack semantics of the topos $\mathcal{E}$. Hence, in the logic of a Grothendieck topos stack semantics, the statement ``for every ring $R$, $\Mod{R}$ has enough injectives'' can't imply any classicality principle. We a little extra work, this should imply that the same statement can't imply any classicality principle in $\mathbf{IZF}$. A similar remark can be found in \cite[pag.15]{Blechschmidt}
\end{Rem}

\section{Description of the new embedding}\label{Description-new-embedding}

In this section we describe how to embed a small abelian category $\mathcal{A}$ into the category of sheaves of modules over a ringed space. The embedding we construct is given by the composition of two ingredients.

First, as detailed in \cite[Appendix A]{Exact-categories}, we know the Yoneda functor gives a full exact embedding
\begin{equation*}
\mathcal{A} \rightarrow \Cat{Ab}(\mathcal{A} , R)
\end{equation*}
where $R$ is the regular topology, i.e.\ the topology whose covering families are given by single regular epimorphisms, and $\Cat{Ab}(\mathcal{A} , R)$ is the category of sheaves of abelian groups over the site $(\mathcal{A} , R)$ or, equivalently, the category of abelian group objects of the category $\Cat{Sh}(\mathcal{A} , R)$ of sheaves over the same site. 

Then, for any given topos $\mathcal{T}$ with a conservative set of points, \cite{Butz-Moerdijk} constructs a topological space $X_{\mathcal{T}}$ and a geometric morphism 
\begin{equation*}
\Cat{Sh}(X_{\mathcal{T}}) \stackrel{\Phi}{\longrightarrow} \mathcal{T}
\end{equation*}
(here $\Cat{Sh}(X_{\mathcal{T}})$ denotes the category of sheaves over $X_{\mathcal{T}}$), whose inverse image 
\begin{equation*}
\phi^* : \mathcal{T} \longrightarrow \Cat{Sh}(X_{\mathcal{T}})    
\end{equation*}
is a full and exact embedding.

Suppose that the topos $\Cat{Sh}(\mathcal{A} , R)$ has a conservative set of points. Considering, as $\mathcal{T}$, the topos $\Cat{Sh}(\mathcal{A} , R)$, and looking at the diagram

\begin{equation*}
    \xymatrix{  \mathcal{A} \ar[r]_-J & \Cat{Ab}(\mathcal{A} , R) \ar[d]\ar[rr]^-{\phi^{\ast}_{\mid \Cat{Ab}(\mathcal{A} , R)}} & & \Cat{Ab}(X_{\Cat{Sh}(\mathcal{A} , R)})\ar[d] \\
    & \Cat{Sh}(\mathcal{A} , R) \ar[rr]^-{\phi^{\ast}} & & \Cat{Sh}(X_{\Cat{Sh}(\mathcal{A} , R)})}
\end{equation*}
we can see that, since $\phi^{\ast}$ is exact, it sends abelian group objects to abelian group objects; moreover the lifting of $\phi^{*}$ to $\Cat{Ab}(\mathcal{A} , R)$ is a full, exact embedding into $\Cat{Ab}(X_{\Cat{Sh}(\mathcal{A} , R)})$, the category of sheaves of abelian groups over the topological space $X_{\Cat{Sh}(\mathcal{A} , R)}$. 

Hence the composition 
\begin{equation} \label{embedding}
\mathcal{A} \xrightarrow{\phi^{\ast}_{\mid \Cat{Ab}(\mathcal{A} , R)} \cdot J}  \Cat{Ab}(X_{\Cat{Sh}(\mathcal{A} , R)}) 
\end{equation}
is a full, exact embedding of the abelian category $\mathcal{A}$ into $\Cat{Ab}(X_{\Cat{Sh}(\mathcal{A} , R)})$, or, equivalently, into sheaves of modules over the ringed space $(X_{\Cat{Sh}(\mathcal{A} , R)}, \mathbb{Z}_{X_{\Cat{Sh}(\mathcal{A} , R)}})$, where $\mathbb{Z}_{X_{\Cat{Sh}(\mathcal{A} , R)}}$ is the constant sheaf on $X_{\Cat{Sh}(\mathcal{A} , R)}$ with image $\mathbb{Z}$. 

To be able to state the existence of the embedding \ref{embedding}, it remains to prove that the topos $\Cat{Sh}(\mathcal{A} , R)$ has a conservative set of points. This is classically true (see \cite[Expos\'{e} VI, Appendix]{SGA4} and \cite[Corollary 2.2.12]{Elephant}); in the next section, we show that it is also possible to give a constructive proof for it.

\section{\texorpdfstring{$(\mathcal{A} , R)$}{(\textit{A}, R)} has a conservative set of points}\label{site_has_enough_points}

In all the following, we assume the reader has some familiarity with sheaf theory. We refer to \cite{MacLane-Moerdijk} and to \cite{Stacks-project} for basic definitions and results.

A point of a topos $\mathcal{T}$ is defined to be a geometric morphism $p$ from $\Cat{Set}$ to $\mathcal{T}$ (see, for instance, \cite{MacLane-Moerdijk}). If $(\mathcal{C}, I)$ is a site of definition of $\mathcal{T}$, points can be equivalently defined in terms of the site, as follows.

\begin{Def}\label{def_point}
A point for the site $(\mathcal{C} , I)$ is a functor $u : \mathcal{C} \rightarrow Set$ such that 
\begin{enumerate}
    \item for every covering family $\{ U_k \rightarrow U \}_{k\in K}$ of $(\mathcal{C} , I)$, the map $\underset{k\in K}{\coprod} u(U_k) \rightarrow u(U)$ is surjective;
    \item for every covering family $\{ U_k \rightarrow U \}_{k\in K}$ and for every morphism $V \rightarrow U$, the maps $u(U_k \times_{U} V) \rightarrow u(U_k) \times_{u(U)} u(V)$ are bijective;
    \item the stalk functor $(-)_p : Sh(\mathcal{C}) \rightarrow Set$ associated to $u$, defined by 
    \begin{equation*}
F \longmapsto F_p \coloneqq \underset{\{(U, x) \mid U\in \mathcal{C}, x \in u(U)\}^{op}}{colim} F(U)
    \end{equation*}
is left exact.
\end{enumerate}
\end{Def}

For the equivalence between the two definitions, see  \cite[Section \stackstag{00Y3}]{Stacks-project}. Note that, even if this is a classical reference, it is possible to show that the proof we refer to still works constructively.

We will often pass without comment between viewing a point as a functor $u$ on the site, and as its associated geometric morphism $p$.

\begin{Def}\label{def_enough_points}
$\{ p_k \}_{k\in K}$ family of points for the site $(\mathcal{C}, I)$ is said to be conservative if, given any $\Phi : F \rightarrow G$ map in $\Cat{Sh}(\mathcal{C}, I)$, if for all $k\in K$ $\Phi_{p_k} : F_{p_k} \rightarrow G_{p_k}$ is an isomorphism, then $\Phi$ is an isomorphism. 
\end{Def}

More than Definition \ref{def_enough_points}, in this paper we will use the following sufficient condition, which is a constructive partial reformulation of the one that can be found in
 \cite[Lemma \stackstag{00YM}]{Stacks-project}.

\begin{Lemma}\label{suff_cond}
Let $\{p_k \}_{k\in K}$ be a family of points for the site $(\mathcal{C}, I)$. Suppose that, for every $F \in \Cat{Sh}(\mathcal{C}, I)$, for every $U\in Ob(\mathcal{C})$ and for every $s, s' \in F(U)$, there exists some $k\in K$ and $x\in u_k (U)$ such that, if $(U , x, s) = (U, x, s')$ in $F_{p_k}$, then $s = s'$.
Then the family $\{p_k \}_{k\in K}$ is conservative.

%\begin{equation}\label{suff_cond}
%\exists k\in K, \ \exists x\in u_k (U) \ . \ (U , x, s) = (U, x, s') \ \Rightarrow \ s = s' 
%\end{equation}
\end{Lemma}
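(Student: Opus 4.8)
The plan is to show that, under the stated hypothesis, any map $\Phi : F \to G$ in $\Cat{Sh}(\mathcal{C}, I)$ which becomes an isomorphism at every stalk $(-)_{p_k}$ must already be an isomorphism in $\Cat{Sh}(\mathcal{C}, I)$. Since a map of sheaves is an isomorphism iff it is both a monomorphism and an epimorphism, and since mono/epi can be tested sectionwise (for mono) and locally (for epi), I would reduce the whole statement to two sectionwise claims: injectivity of $\Phi_U : F(U) \to G(U)$ for every $U$, and local surjectivity of $\Phi$. The hypothesis is tailor-made for the injectivity half: given $s, s' \in F(U)$ with $\Phi_U(s) = \Phi_U(s')$, pick the $k \in K$ and $x \in u_k(U)$ provided by the hypothesis (for the pair $s, s'$ and the sheaf $F$); because $\Phi_{p_k}$ is injective (being an isomorphism of stalks), $(U, x, s)$ and $(U, x, s')$ have the same image in $G_{p_k}$, hence are equal in $F_{p_k}$, and then the hypothesis forces $s = s'$. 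This gives that $\Phi$ is a monomorphism of sheaves.

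For the surjectivity half I would argue as follows. Given $U \in \Cat{C}$ and $t \in G(U)$, I want a covering $\{ U_i \to U \}$ and sections $s_i \in F(U_i)$ with $\Phi_{U_i}(s_i) = t|_{U_i}$; this is exactly the statement that $\Phi$ is an epimorphism of sheaves (an epi is a local epi, and the associated-sheaf/plus-construction description of the image means it suffices to hit $t$ after passing to a cover). To produce such a cover, I would use that at each stalk the germ $t_{p_k}$ of $t$ lifts to some germ in $F_{p_k}$, because $\Phi_{p_k}$ is surjective; unwinding the filtered-colimit description of the stalk in Definition \ref{def_point}(3), such a germ is represented by a pair $(V, s)$ with $s \in F(V)$ and a map $V \to U$ in the diagram over which the colimit is taken, with the property that $\Phi_V(s)$ and the restriction of $t$ to $V$ agree on some further refinement. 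Collecting these $V$'s as $x$ ranges over $u_k(U)$ and $k$ ranges over $K$, and using condition (1) of Definition \ref{def_point} (covering families go to surjections on stalks) together with the assumption that $\{p_k\}$ is a \emph{family} of points indexed by a set, one assembles a covering family of $U$ over which $t$ lifts. Then sheafiness of $F$ (gluing the locally-defined lifts, whose pairwise agreement on overlaps comes from the already-established monomorphism property of $\Phi$) yields a genuine section mapping to $t$, so $\Phi$ is an epimorphism.

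Combining the two halves, $\Phi$ is a mono and an epi in the topos $\Cat{Sh}(\mathcal{C}, I)$, hence an isomorphism, which is precisely conservativity of $\{p_k\}_{k\in K}$ in the sense of Definition \ref{def_enough_points}.

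The main obstacle I expect is the surjectivity/epi step, specifically the passage from ``the germ of $t$ lifts at each stalk'' to ``there is an honest covering family of $U$ over which $t$ lifts.'' Constructively one must be careful: the filtered colimit defining $F_{p_k}$ is taken over the category of pairs $(V, x)$, so a lift of a germ only gives a section over some not-necessarily-principal neighbourhood, and one needs condition (2) of Definition \ref{def_point} (the pullback-preservation of $u_k$) together with the sheaf condition on $F$ and on $G$ to turn the family of such neighbourhoods, indexed over the set $\{(k, x) : k \in K,\ x \in u_k(U)\}$, into a legitimate covering sieve. Classically this is routine, but in $\mathbf{CZF}$ one must check that every choice made (of $k$, of $x$, of the representing pair $(V,s)$) can be made uniformly enough to form a set-indexed cover — which is why the hypothesis is phrased as an \emph{existence} of $k$ and $x$ for each triple $(F, U, s, s')$, and why one only obtains the stated sufficient condition rather than a full equivalence with Definition \ref{def_enough_points}.
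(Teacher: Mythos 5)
Your monomorphism half is exactly the paper's argument and is fine: germ-equality holds at every $(k,x)$, the hypothesis supplies one $(k,x)$ at which this forces equality of sections, so $\Phi_U$ is injective for every $U$.

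The epimorphism half, however, has a genuine gap, and it is located precisely where you yourself flag the obstacle. You propose to pass from ``the germ of $t$ lifts along $\Phi_{p_k}$ at every $k$ and every $x\in u_k(U)$'' to ``there is a covering family of $U$ over which $t$ lifts''. Nothing in Definition \ref{def_point} licenses this direction: condition (1) sends covering families to surjections on $u_k$, not conversely, and in a general site the sieve of all maps $V\to U$ over which $t$ lifts to $F$ need not be covering merely because every point ``sees'' a lift. Indeed, your epi step never uses the hypothesis of Lemma \ref{suff_cond} (which is purely a separation condition on germs), so if it worked it would show that \emph{any} set-indexed family of points detects epimorphisms --- false already for a single point, or for a family missing part of the topos; the implication you want is essentially the conservativity being proved, so the argument is circular. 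The paper avoids this entirely by a codiagonal trick: $\Phi$ is an epimorphism if and only if the codiagonal $G\amalg_F G\to G$ is an isomorphism; since each stalk functor is exact and cocontinuous, this codiagonal is stalkwise an isomorphism, its sectionwise surjectivity is automatic (it is split by either coprojection), and its sectionwise injectivity follows by rerunning your monomorphism argument with the sheaf $G\amalg_F G$ in place of $F$ --- this is why the hypothesis is quantified over all sheaves. If you want to salvage a ``local lifting'' proof, you would have to reduce to detecting that the mono $\mathrm{Im}\,\Phi\rightarrowtail G$ is an isomorphism, which again comes down to a mono-detection statement rather than to assembling covers from points.
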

\begin{proof}
Let $\Phi : F \rightarrow G$ be a map of sheaves, and suppose that, for every $k\in K$, $\Phi_{p_k}$ is an isomorphism. We want to show that $\Phi$ is an isomorphism too. Indeed:

\begin{itemize}
 \item[$\diamond$] fixed $U\in Ob(\mathcal{C})$, and given $y, y'$ in $G(U)$, suppose that $\Phi_U (y) = \Phi_U (y')$. Then, for every $k\in K$ and for every $x\in p_k (U)$, $(U, x, \Phi_U (y)) = (U, x , \Phi_U (y'))$ in $G_{p_k}$.

Since, for every $k\in K$, $\Phi_{p_k}$ is an isomorphism, and since, for every $k\in K$ and for every $x\in p_k (U)$, $\Phi_{p_k} (U, x, y)=(U, x, \Phi_U (y))=(U, x, \Phi_U (y')) = \Phi_{p_k} (U, x, y')$, then, for every $k\in K$ and for every $x\in p_k (U)$, $(U, x , y) = (U, x, y')$. By hypothesis, we can conclude that $y = y'$. Hence $\Phi$ is a monomorphism, because it is so componentwise;
\item[$\diamond$] we can show that $G \amalg_{F} G \rightarrow G$ is an isomorphism (equivalent to say that $\Phi$ is an epimorphism). The surjectivity follows from the definition of the map, whereas the injectivity can be shown as we did for $\Phi$, since the stalk functor is exact, and so the codiagonal map is stalkwise an isomorphism. \qedhere
\end{itemize} 
\end{proof}

The rest of this section is devoted to prove the existence of a conservative set of points for the site $(\mathcal{A} , R)$, with $\mathcal{A}$ a small abelian category and $R$ the regular topology on it. 

\subsection{Construction of the pair \texorpdfstring{$(\mathbb{I}, J_{\mathbb{I}})$}{(I, J(I))}.}\label{Constructing_I}

The construction we are going to perform was inspired by \cite[Section \stackstag{00YN}]{Stacks-project}; however, it has been modified for constructive purposes.
Throughout the whole Subsection \ref{Constructing_I}, $(\mathcal{C}, R)$ will denote a small site equipped with the regular topology.

Consider a directed set $(\overline{J} , \leq)$, and suppose given a functor $G_{\mathbb{J}} : \mathbb{J} \rightarrow \mathcal{C}$, where $\mathbb{J} \coloneqq (\overline{J} , \leq)^{op}$. We define the functor $u_{\mathbb{J}} : \mathcal{C} \rightarrow Set$ as 
\begin{equation}\label{u_J}
u_{\mathbb{J}} (V) \coloneqq \underset{j\in\mathbb{J}}{\colim} \ \Hom_{\mathcal{C}} (G_{\mathbb{J}} (j) , V)    
\end{equation}
Note that the associated stalk (we allow this naming, even if $p_{\mathbb{J}}$ may or may not be a point) turns out to be
\begin{equation*}
F_{p_{\mathbb{J}}} = \underset{j\in\mathbb{J}}{\colim} \ F(G_{\mathbb{J}} (j))
\end{equation*}

\begin{Def}\label{refinement} Given two pairs $(\mathbb{J}_1 , G_{\mathbb{J}_1})$ and $(\mathbb{J}_2 , G_{\mathbb{J}_2})$ as above, we say that $(\mathbb{J}_2 , G_{\mathbb{J}_2})$ is a refinement of $(\mathbb{J}_1 , G_{\mathbb{J}_1})$ if it is equipped with a full faithful functor $\mathbb{J}_1 \stackrel{i}{\rightarrow} \mathbb{J}_2$ such that $G_{\mathbb{J}_2} \cdot i = G_{\mathbb{J}_1}$.
\end{Def}
If $(\mathbb{J}_2 , G_{\mathbb{J}_2})$ is a refinement of $(\mathbb{J}_1 , G_{\mathbb{J}_1})$, then we have two natural transformations, both given, componentwise, by the universal maps of the colimits involved:
\begin{equation*}
    (\psi_{\mathbb{J}, \mathbb{I}})_{.} : u_{\mathbb{J}} \rightarrow u_{\mathbb{I}} \ \ \ (\overline{\psi_{\mathbb{J}, \mathbb{I}}})_{.} : F_{p_{\mathbb{J}}} \rightarrow F_{p_{\mathbb{I}}}
\end{equation*}

Let $(\mathbb{J}, G_{\mathbb{J}})$ be a pair as above. We define $E_{\mathbb{J}}$ to be the collection of all the triples $(j , f, \epsilon)$ such that $j\in \mathbb{J}$, $f$ is a map from $G_{\mathbb{J}}(j)$ to some $W \in \mathcal{C}$, and $\{W' \xrightarrow{\epsilon} W\}$ is a covering family for $(\mathcal{C}, R)$ (i.e.\ $\epsilon$ is a regular epimorphism in $\mathcal{C}$). The smallness of $\mathcal{C}$ ensures us that $E_{\mathbb{J}}$ is a set.

\begin{Def}\label{goodness for}
Given $(\mathbb{J}_1 , G_{\mathbb{J}_1})$ and $(\mathbb{J}_2 , G_{\mathbb{J}_2})$ as in Definition \ref{refinement}, and $e \coloneqq (j , f , W' \xrightarrow{\epsilon} W) \in E_{\mathbb{J}_1}$, we say that $(\mathbb{J}_2 , G_{\mathbb{J}_2})$ is good for $e$ if $(\psi_{\mathbb{J}_1 , \mathbb{J}_2})_W (j , f ) \in \Image (u_{\mathbb{J}_2} (\epsilon))$.

We say that $(\mathbb{J}_2 , G_{\mathbb{J}_2})$ is good for $E_{\mathbb{J}_1}$ if it is good for all its elements.
\end{Def}

Definition \ref{goodness for} and the following construction are all motivated by the need to find a refinement $(\mathbb{I}, G_{\mathbb{I}})$ of a given $(\mathbb{J}, G_{\mathbb{J}})$ which is good for the whole set $E_{\mathbb{I}}$: this condition will ensure us that the associated functor $u_{\mathbb{I}}$ defines a point $p_{\mathbb{I}}$. 

\begin{Lemma}\label{Lemma_refinements}
Let $(\mathbb{J}_2 , G_{\mathbb{J}_2})$ be a refinement of $(\mathbb{J}_1 , G_{\mathbb{J}_1})$, and let $e$ be an element of $ E_{\mathbb{J}_1}$.  If $(\mathbb{J}_2 , G_{\mathbb{J}_2})$ is good for $e$, then every refinement of $(\mathbb{J}_2 , G_{\mathbb{J}_2})$ is so.
\end{Lemma}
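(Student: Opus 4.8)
The plan is to reduce the statement to the transitivity of the refinement relation together with a short diagram chase using the naturality and functoriality of the comparison maps $\psi$.

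\emph{Step 1: set-up and transitivity of refinement.} Write $e = (j, f, W' \xrightarrow{\epsilon} W) \in E_{\mathbb{J}_1}$, and let $i : \mathbb{J}_1 \to \mathbb{J}_2$ be the full faithful functor with $G_{\mathbb{J}_2}\cdot i = G_{\mathbb{J}_1}$ witnessing that $(\mathbb{J}_2, G_{\mathbb{J}_2})$ refines $(\mathbb{J}_1, G_{\mathbb{J}_1})$. Let $(\mathbb{J}_3, G_{\mathbb{J}_3})$ be an arbitrary refinement of $(\mathbb{J}_2, G_{\mathbb{J}_2})$, witnessed by a full faithful $i' : \mathbb{J}_2 \to \mathbb{J}_3$ with $G_{\mathbb{J}_3}\cdot i' = G_{\mathbb{J}_2}$. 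Then $i'\cdot i : \mathbb{J}_1 \to \mathbb{J}_3$ is full faithful and satisfies $G_{\mathbb{J}_3}\cdot(i'\cdot i) = G_{\mathbb{J}_1}$, so $(\mathbb{J}_3, G_{\mathbb{J}_3})$ is a refinement of $(\mathbb{J}_1, G_{\mathbb{J}_1})$, and hence it makes sense to ask whether it is good for $e \in E_{\mathbb{J}_1}$.

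\emph{Step 2: cocycle identity for the comparison maps.} By construction $(\psi_{\mathbb{J}, \mathbb{I}})_V$ is the canonical map $\underset{j}{\colim}\,\Hom_{\mathcal{C}}(G_{\mathbb{J}}(j), V) \to \underset{j}{\colim}\,\Hom_{\mathcal{C}}(G_{\mathbb{I}}(j), V)$ induced by restricting the indexing diagram along the witnessing functor; since such induced maps on colimits compose, one gets $\psi_{\mathbb{J}_1, \mathbb{J}_3} = \psi_{\mathbb{J}_2, \mathbb{J}_3}\circ\psi_{\mathbb{J}_1, \mathbb{J}_2}$ as natural transformations $u_{\mathbb{J}_1} \to u_{\mathbb{J}_3}$.

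\emph{Step 3: the chase.} By the assumption that $(\mathbb{J}_2, G_{\mathbb{J}_2})$ is good for $e$, there is $a \in u_{\mathbb{J}_2}(W')$ with $u_{\mathbb{J}_2}(\epsilon)(a) = (\psi_{\mathbb{J}_1, \mathbb{J}_2})_W(j, f)$. Put $b \coloneqq (\psi_{\mathbb{J}_2, \mathbb{J}_3})_{W'}(a) \in u_{\mathbb{J}_3}(W')$. Applying naturality of $\psi_{\mathbb{J}_2, \mathbb{J}_3}$ to $\epsilon : W' \to W$ and then Step 2,
\[
u_{\mathbb{J}_3}(\epsilon)(b) = (\psi_{\mathbb{J}_2, \mathbb{J}_3})_W\big(u_{\mathbb{J}_2}(\epsilon)(a)\big) = (\psi_{\mathbb{J}_2, \mathbb{J}_3})_W\big((\psi_{\mathbb{J}_1, \mathbb{J}_2})_W(j, f)\big) = (\psi_{\mathbb{J}_1, \mathbb{J}_3})_W(j, f),
\]
so $(\psi_{\mathbb{J}_1, \mathbb{J}_3})_W(j, f) \in \Image(u_{\mathbb{J}_3}(\epsilon))$, i.e. $(\mathbb{J}_3, G_{\mathbb{J}_3})$ is good for $e$. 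Since $(\mathbb{J}_3, G_{\mathbb{J}_3})$ was arbitrary, every refinement of $(\mathbb{J}_2, G_{\mathbb{J}_2})$ is good for $e$.

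I do not expect a genuine obstacle: the only point requiring a line of care is the cocycle identity of Step 2, which is just the functoriality of ``restrict the indexing diagram, then take the induced map on colimits'', and being explicit that all the data respect the chosen witnessing functors; the remaining diagram chase is constructively unproblematic.
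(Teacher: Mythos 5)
Your proposal is correct and follows essentially the same route as the paper's proof: the key points in both are the composition identity $(\psi_{\mathbb{J}_1,\mathbb{J}_3})_W = (\psi_{\mathbb{J}_2,\mathbb{J}_3})_W \cdot (\psi_{\mathbb{J}_1,\mathbb{J}_2})_W$ and the naturality square of $\psi_{\mathbb{J}_2,\mathbb{J}_3}$ at $\epsilon$, from which the membership $(\psi_{\mathbb{J}_1,\mathbb{J}_3})_W(j,f) \in \Image(u_{\mathbb{J}_3}(\epsilon))$ follows; your version merely makes the preimage element and the transitivity of refinement explicit, where the paper phrases the same chase as an inclusion of images.
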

\begin{proof}
Suppose $e = (j, f , \epsilon)$, and let $W$ be the codomain of $\epsilon$ and of $f$; by hypothesis, $(\psi_{\mathbb{J}_1 , \mathbb{J}_2})_W (j ,f)\in \Image (u_{\mathbb{J}_2} (\epsilon))$. Given a refinement $(\mathbb{J}_3 , G_{\mathbb{J}_3})$ of $(\mathbb{J}_2 , G_{\mathbb{J}_2})$, from the commutative diagram
\begin{equation*}
\xymatrix{ u_{\mathbb{J}_1} (W') \ar[d]_{u_{\mathbb{J}_1}(\epsilon)} \ar[r]^{(\psi_{\mathbb{J}_1, \mathbb{J}_2})_{W'}} & u_{\mathbb{J}_2} (W') \ar[d]_{u_{\mathbb{J}_2}(\epsilon)} \ar[r]^{(\psi_{\mathbb{J}_2, \mathbb{J}_3})_{W'}} & u_{\mathbb{J}_3} (W') \ar[d]^{u_{\mathbb{J}_3}(\epsilon)} \\
u_{\mathbb{J}_1}(W') \ar[r]_{(\psi_{\mathbb{J}_1 , \mathbb{J}_2})_W } & u_{\mathbb{J}_2}(W') \ar[r]_{(\psi_{\mathbb{J}_2 , \mathbb{J}_3})_W} & u_{\mathbb{J}_3}(W)}
\end{equation*}
and from the fact that $(\psi_{\mathbb{J}_1 , \mathbb{J}_3})_W = (\psi_{\mathbb{J}_2 , \mathbb{J}_3})_W \cdot (\psi_{\mathbb{J}_1 , \mathbb{J}_2})_W$, we see that 
\[ (\psi_{\mathbb{J}_1 , \mathbb{J}_3})_W (j , f) \in \Image ((\psi_{\mathbb{J}_2 , \mathbb{J}_3})_W \cdot u_{\mathbb{J}_2} (\epsilon)) = \Image ( u_{\mathbb{J}_3} (\epsilon) \cdot (\psi_{\mathbb{J}_2 , \mathbb{J}_3})_{W'}) \subseteq \Image (u_{\mathbb{J}_3} (\epsilon)) \] 
\end{proof}

Fixing a pair $(\mathbb{J}, G_{\mathbb{J}})$, and writing $\pi : E_{\mathbb{J}} \rightarrow \mathbb{J}$ for the projection onto the first components, we construct, for every $S \in \mathcal{P} _{Fin} (E_{\mathbb{J}}) $, a new pair $(\mathbb{J}_S , G_{\mathbb{J}_S})$ as follows:
\begin{enumerate}
\item $Ob (\mathbb{J}_S) \coloneqq \underset{T \in \mathcal{P} (S)}{\coprod} Ob (\mathbb{J} / T)$, where $\mathbb{J} / T$ is the multisliced category, defined, for every $T = \{ e_1 , \dots , e_t \}$ with $\pi (e_k) = j_k$, as the category with diagrams in $\mathbb{J}$ of the following shape as objects
\begin{equation*}
\xymatrix{ & & j\ar[dll]\ar[dl]\ar[drr] & & \\
            j_1 & j_2 & \dots & \dots & j_t}
\end{equation*}
and morphisms defined in the obvious way. If $T = \emptyset$, then $\mathbb{J}/T = \mathbb{J}$. If $T = \{ e_1 \} $ is a singleton, then the category $\mathbb{J} / T$ is the slice category over $ \pi (e_1)$.

\item Given $T = \{ e_1 , \dots , e_t \}, T' = \{ e'_1 , \dots , e'_{t'} \} \subseteq S$, if $T \subseteq T'$, we can define the forgetful functor $H_{T' , T}: \mathbb{J}/T' \rightarrow \mathbb{J}/T$ which acts as 
\begin{equation*}
\xymatrix{ & & j'\ar[ddll]\ar[ddl]\ar[ddrr] & & & & & j'\ar[ddll]\ar[ddl]\ar[ddrr] & &\\
            & & & &\ar@{|->}[r]^{H_{T' , T}} & &  \\
            j'_1 & j'_2 & \dots & \dots & j'_{t'} &  j_1 & j_2 & \dots & \dots & j_t}
\end{equation*}
We define $H_{T} \coloneqq H_{T , \emptyset} : \mathbb{J}/T \rightarrow \mathbb{J}$.

The maps of the category $\mathbb{J}_S$ will be generated by:
\begin{itemize}
    \item[$\diamond$] all the maps inside every $\mathbb{J} / T$;
    \item[$\diamond$] for every $T \subseteq T' \subseteq S$, and for every $\hat{j'}\in \mathbb{J} /T'$, a map $ H_{T' , T}^{(\hat{j'})} : \hat{j'} \rightarrow H_{T' , T} (\hat{j'})$. 
\end{itemize}
Precisely, given an object $\hat{j'}\in \mathbb{J} /T'$ and an object $\hat{j} \in \mathbb{J} /T$, then
\begin{equation*}
\Hom_{\mathbb{J}_S} (\hat{j'}, \hat{j}) \cong 
\begin{cases*}
      \Hom_{\mathbb{J} / T} (H_{T' , T} (\hat{j'}) , \hat{j}) & if $T\subseteq T'$ \\
      \emptyset & otherwise
    \end{cases*}
\end{equation*}

\item If $T = \{ e_1 , \dots , e_t\}$ and $\hat{j} \in \mathbb{J} / T$ is the diagram
\begin{equation*}
\xymatrix{ & j\ar[dl]\ar[d]\ar[dr] & \\
            j_1 & \dots & j_t}
\end{equation*}
then $G_ {\mathbb{J}_S} (\hat{j})$ is defined as the limit of the diagram

\begin{equation*}
\xymatrix{ P_{H_{T, \{e_1\}}(\hat{j}))}^{e_1}\ar[rd] & \dots\ar[d] & P_{H_{T , \{e_t\}}(\hat{j}))}^{e_t}\ar[ld] \\
& G_{\mathbb{J}} (j) &}
\end{equation*}

where, given $e = (j_1 , f_1, \epsilon_1) \in E_{\mathbb{J}}$ and $\hat{j'} = (j' \rightarrow j_1) \in \mathbb{J} / \{ e \}$, we define $P_{\hat{j'}}^e$ as $ G_{\mathbb{J}}(j')\times _{W} W'$, i.e.\ the pullback
\begin{equation*}
\xymatrix{G_{\mathbb{J}}(j')\times _{W} W'\ar[r]\ar[d] & W'\ar[d]^{\epsilon_1} \\
            G_{\mathbb{J}} (j')\ar[r]_{f_1 \cdot G_{\mathbb{J}}(j' \rightarrow j_1)} & W}    
\end{equation*}

Note that, if $T = \emptyset$, then $G_{\mathbb{J}_S} (\hat{j}) = G_{\mathbb{J}} (\hat{j})$, whereas, if $T = \{e_1 \}$ is a singleton, then $G_{\mathbb{J}_S} (\hat{j}) = P_{\hat{j}}^{e}$.

Given $\hat{j}, \hat{j'} \in \mathbb{J}_S$, the map $G_{\mathbb{J}_S} (\hat{j'} \rightarrow \hat{j})$ is given by the universal property of limits. One can easily check the functoriality of $G_{\mathbb{J}_S}$.
Moreover, as a particular case of this definition, we get, when $\hat{j} = H_{T' , T} (\hat{j'})$, the definition of $G_{\mathbb{J}_S} (H_{T' , T}^{(\hat{j'})})$.

Note that $G_{\mathbb{J}_S}$ does not depend on the order of the elements of $T\subseteq S$.
\end{enumerate}
A few considerations can be made on the data $(\mathbb{J}_S , G_{\mathbb{J}_S})$ .
\begin{Prop}
Every $\mathbb{J}_S$ is the dual of a directed set.
\end{Prop}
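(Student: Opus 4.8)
The plan is to prove directly that $\mathbb{J}_S^{op}$ is a directed set — indeed a directed poset: that $\mathbb{J}_S$ is thin and antisymmetric, that it is inhabited, and that any two of its objects admit a common object mapping into both of them. This is precisely what ``$\mathbb{J}_S$ is the dual of a directed set'' unpacks to.

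First I would set up a concrete description of $\mathbb{J}_S$. Since $\mathbb{J} = (\overline{J},\leq)^{op}$ is a poset, for every $T \subseteq S$ the multislice $\mathbb{J}/T$ is isomorphic to the full subposet of $\mathbb{J}$ on those objects $j$ that admit a morphism $j \to \pi(e)$ for each $e \in T$ — the legs of such cones being uniquely determined by thinness of $\mathbb{J}$. Hence an object of $\mathbb{J}_S$ amounts to a pair $(T,j)$ of this kind, and the explicit description of $\Hom_{\mathbb{J}_S}$ given in item (2) of the construction, together with thinness of each $\mathbb{J}/T$ and the fact that $H_{T',T}$ leaves apices unchanged, shows that there is a morphism $(T',j') \to (T,j)$ in $\mathbb{J}_S$ precisely when $T \subseteq T'$ and there is a (necessarily unique) morphism $j' \to j$ in $\mathbb{J}$. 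From this, thinness of $\mathbb{J}_S$ is immediate; reflexivity and transitivity are clear; and antisymmetry follows because $(T',j') \to (T,j) \to (T',j')$ forces $T = T'$ and $j \cong j'$ in the poset $\mathbb{J}$, hence $j = j'$. Inhabitedness is witnessed by $(\emptyset, j)$ for any object $j$ of $\mathbb{J}$, which exists since $(\overline{J},\leq)$, being a directed set, is inhabited.

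The heart of the argument is the (co)directedness clause. Given objects $(T_1,j_1)$ and $(T_2,j_2)$ of $\mathbb{J}_S$, I would take $T \coloneqq T_1 \cup T_2 \in \mathcal{P}(S)$ and, using directedness of $(\overline{J},\leq)$, pick $k$ with $j_1 \leq k$ and $j_2 \leq k$, so that $\mathbb{J}$ has morphisms $k \to j_1$ and $k \to j_2$. One checks that $(T,k)$ is a legitimate object of $\mathbb{J}_S$: for $e \in T_1$ the composite $k \to j_1 \to \pi(e)$ supplies the required leg (using $(T_1,j_1) \in \mathbb{J}_S$), and symmetrically for $e \in T_2$. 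Finally, $T_1 \subseteq T$ together with the morphism $k \to j_1$ yields, via the $\Hom$-description above, a morphism $(T,k) \to (T_1,j_1)$, and likewise $(T,k) \to (T_2,j_2)$; in $\mathbb{J}_S^{op}$ this makes $(T,k)$ an upper bound of $(T_1,j_1)$ and $(T_2,j_2)$.

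I expect the only real friction to be that first step — convincing oneself that the layered multislice construction really does collapse, at the level of objects and morphisms, to the simple description ``a pair $(T,j)$ with $T \subseteq S$ and $j$ an apex in $\mathbb{J}$, ordered by reverse inclusion of $T$ and by $\mathbb{J}$ on apices.'' Once that is in hand, everything reduces cleanly to directedness of $(\overline{J},\leq)$, and the proof is manifestly constructive: the only choice made is that of a single upper bound in the given directed set.
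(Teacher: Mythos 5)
Your proof is correct and follows essentially the same route as the paper's: reflexivity and transitivity from identities and composition, inhabitedness from that of $\mathbb{J}$, and for directedness the same key construction of taking the union $T_1\cup T_2$ together with an upper bound of the two apices in $(\overline{J},\leq)$, the legs being supplied by composites through $j_1$ and $j_2$. Your preliminary collapse of the multislices to pairs $(T,j)$ (using thinness of $\mathbb{J}$) and the antisymmetry check are extra detail the paper omits, not a different argument; just note that antisymmetry needs $(\overline{J},\leq)$ to be a poset rather than a mere preorder, a hypothesis the paper itself never uses.
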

\begin{proof}
Define $\overline{J}_S \coloneqq Ob(\mathbb{J}_S)$, and $\leq_S$ the relation given by $\hat{j}\leq_S \hat{j'} \Leftrightarrow (\hat{j'} \rightarrow \hat{j})$. Then $\mathbb{J}_S$ is inhabited (because so is $\mathbb{J}$). Moreover:
\begin{enumerate}
    \item the identity maps in $\mathbb{J}_S$ give the reflexivity of $\leq_S$;
    \item the composition of maps in $\mathbb{J}_S$ gives the transitivity of $\leq_S$; 
    \item for every $\hat{j}, \hat{j'} \in \mathbb{J}_S$, with $\hat{j} \in \mathbb{J}/T$ and $\hat{j'} \in \mathbb{J}/T'$, there exists $z\in\mathbb{J}$ such that $H_T (\hat{j}) \leq_J z$ and $H_{T'} (\hat{j'}) \leq_J z$.
    Called $\hat{z}$ the object
\begin{equation*}
    \xymatrix{ & & z\ar[ld]\ar[rd] & \\
               & j\ar[ld]\ar[rd] & & j'\ar[ld]\ar[rd] & \\
               j_1 & \dots & j_t \ \ j'_1 & \dots & j_{t'} }
\end{equation*}
in $\mathbb{J} / (T \coprod T')$, then in $\mathbb{J}_S$ we have the maps $\hat{z} \rightarrow H_{T \coprod T' , T} (\hat{z})\rightarrow \hat{j}$ and $\hat{z}\rightarrow H_{ T \coprod T' , T'} (\hat{z})\rightarrow \hat{j'}$; hence $\hat{j} \leq_S \hat{z}$ and $\hat{j'}\leq_S \hat{z}$. \qedhere
\end{enumerate}
\end{proof}

\begin{Prop}
Let $\mathcal{E} : (\mathcal{P}_{Fin} (E_{\mathbb{J}}), \subseteq) \rightarrow Sets$ be the diagram which sends every $S$ to $\mathbb{J}_S$ and every inclusion $S \subseteq S'$ to the map $i_{S , S'} : \mathbb{J}_S \rightarrow \mathbb{J}_{S'}$. Then $\mathbb{J}^{+} \coloneqq \underset{S \in \mathcal{P}_{Fin}(E_{\mathbb{J}})}{\colim} \mathcal{E}(S)$ is the opposite of a directed set, and there is a functor $G_{\mathbb{J}^{+}} : \mathbb{J}^{+} \rightarrow \mathcal{C}$, defined as $G_{\mathbb{J}^{+}} (S , \hat{j}) \coloneqq G_{\mathbb{J}_S} (\hat{j})$.
\end{Prop}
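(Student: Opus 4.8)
The plan is to read $\mathbb{J}^{+}$ as a filtered colimit and to transport everything along the colimit cocone: first verify that the index poset is directed and that $\mathcal{E}$ is a genuine diagram, then push the ``opposite of a directed set'' structure up to $\mathbb{J}^{+}$, and finally obtain $G_{\mathbb{J}^{+}}$ from the universal property.

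First I would dispatch the bookkeeping needed to even form $\mathbb{J}^{+}$. Since $\mathcal{C}$ is small, $E_{\mathbb{J}}$ is a set, and — as already used in the construction — so is $\mathcal{P}_{Fin}(E_{\mathbb{J}})$; ordered by inclusion it is inhabited (it contains $\emptyset$) and closed under binary unions, hence directed, so the colimit defining $\mathbb{J}^{+}$, computed level by level on objects and order relations (equivalently, as a filtered colimit in $\mathrm{Cat}$), exists in $\mathbf{CZF}$. Next I would make $\mathcal{E}$ explicit: for $S\subseteq S'$ we have $\mathcal{P}(S)\subseteq\mathcal{P}(S')$, so the assignment $(T,\hat{j})\mapsto(T,\hat{j})$ on objects, together with the evident assignment on morphisms, defines a functor $i_{S,S'}:\mathbb{J}_S\to\mathbb{J}_{S'}$ which is an isomorphism onto a full subcategory (in particular full, faithful, and injective on objects). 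Moreover $G_{\mathbb{J}_S}(\hat{j})$ was built using only $T$, the diagram $\hat{j}$, and the $G_{\mathbb{J}}$-images of its vertices, with no reference to the ambient $S$; hence $G_{\mathbb{J}_{S'}}\cdot i_{S,S'}=G_{\mathbb{J}_S}$, i.e.\ every $i_{S,S'}$ exhibits $(\mathbb{J}_{S'},G_{\mathbb{J}_{S'}})$ as a refinement of $(\mathbb{J}_S,G_{\mathbb{J}_S})$ in the sense of Definition \ref{refinement}. Functoriality of $\mathcal{E}$ ($i_{S,S}=\mathrm{id}$ and $i_{S',S''}\cdot i_{S,S'}=i_{S,S''}$) is then immediate; so $\mathbb{J}^{+}$ makes sense, and, the transition functors being injective, it is literally the union of the $\mathbb{J}_S$.

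For the structure on $\mathbb{J}^{+}$, write a generic object as $[(S,\hat{j})]$ with $\hat{j}\in\mathbb{J}_S$, and let $\leq$ be the relation induced by the $\leq_S$. Reflexivity and transitivity (and antisymmetry, if ``directed set'' is read to include it) each concern only finitely many elements and finitely many instances of the relation, so by directedness of the index each such instance already lives in a single $\mathbb{J}_S$, where it holds because $\mathbb{J}_S$ is the opposite of a directed set; hence these properties pass to $\mathbb{J}^{+}$, which is inhabited since $\mathbb{J}_{\emptyset}=\mathbb{J}$ is. The one genuinely load-bearing clause is directedness: given $[(S_1,x_1)]$ and $[(S_2,y_2)]$, I would set $S\coloneqq S_1\cup S_2$, push $x_1$ and $y_2$ forward into $\mathbb{J}_S$ along $i_{S_1,S}$ and $i_{S_2,S}$, use that $\mathbb{J}_S$ is the opposite of a directed set to find $z_0\in\mathbb{J}_S$ above both images, and take $z\coloneqq[(S,z_0)]$; functoriality (hence monotonicity) of the transition maps then gives $[(S_1,x_1)]\leq z$ and $[(S_2,y_2)]\leq z$.

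Finally, for the functor: the family $\{\, G_{\mathbb{J}_S}:\mathbb{J}_S\to\mathcal{C} \,\}_{S}$ is a cocone over $\mathcal{E}$ precisely because of the identity $G_{\mathbb{J}_{S'}}\cdot i_{S,S'}=G_{\mathbb{J}_S}$ established above, so the universal property of the colimit produces a unique functor $G_{\mathbb{J}^{+}}:\mathbb{J}^{+}\to\mathcal{C}$ through which all the $G_{\mathbb{J}_S}$ factor; unwinding the construction, its value on $[(S,\hat{j})]$ is $G_{\mathbb{J}_S}(\hat{j})$, which is the asserted formula. I expect the only real obstacle to be the directedness clause, the subtlety being that it cannot be checked at a fixed stage — each $\mathbb{J}_S$ bounds only its own pairs — so one must merge two stages through the union in $\mathcal{P}_{Fin}(E_{\mathbb{J}})$; constructively, the only points worth a second glance are that binary unions of finite sets are finite and that the colimit exists in $\mathbf{CZF}$, both routine. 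Everything else is standard bookkeeping about colimits of posets along full faithful embeddings.
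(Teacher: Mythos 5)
Your proposal is correct and follows essentially the same route as the paper: both treat $\mathbb{J}^{+}$ as a filtered colimit over the directed poset $(\mathcal{P}_{Fin}(E_{\mathbb{J}}),\subseteq)$ (upper bounds given by unions), deduce that $\mathbb{J}^{+}$ is the opposite of a directed set because each stage $\mathbb{J}_S$ is, and obtain $G_{\mathbb{J}^{+}}$ from the compatibility $G_{\mathbb{J}_{S'}}\cdot i_{S,S'}=G_{\mathbb{J}_S}$. You merely spell out details the paper leaves implicit (the explicit description of $i_{S,S'}$, the stage-merging argument for directedness, and phrasing well-definedness via the universal property rather than checking it on representatives), which does not change the argument.
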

\begin{proof}
First of all, note that $\mathcal{P}_{Fin} (E_{\mathbb{J}})$ is a filtered category, since every finite diagram $\mathcal{D} : X \rightarrow \mathcal{P}_{Fin} (E_{\mathbb{J}})$ has a cocone, namely $\underset{S \in \mathcal{D} (X)}{\bigcup} S$. Then $\mathbb{J}^{+} \coloneqq \underset{S \in \mathcal{P}_{Fin}(E_{\mathbb{J}})}{\colim} \mathcal{E}(S)$ is a filtered colimit. It follows that $\mathbb{J}^{+}$ is the opposite of a directed set $(\overline{J^+} , \leq_{\overline{J^+}})$, since every $\mathbb{J}_S$ is so.

$G_{\mathbb{J}^{+}}$ is well defined: if $( S , \hat{j}) = (S' , \hat{j'})$ in $\mathbb{J}^{+}$, then $S \subseteq S'$ and $i_{S , S'} (\hat{j}) = \hat{j'}$ (or $S' \subseteq S$ and $i_{S' , S} (\hat{j'}) = \hat{j}$); hence, $G_{\mathbb{J}^{+}} (S' , \hat{j'}) = G_{\mathbb{J}_{S'}} (\hat{j'}) = G_{\mathbb{J}_{S'}} (i_{S , S'} (\hat{j})) = G_{\mathbb{J}_{S}} (\hat{j}) = G_{\mathbb{J}^{+}} (S , \hat{j})$
(or $G_{\mathbb{J}^{+}} (S , \hat{j}) = G_{\mathbb{J}_{S}} (\hat{j}) = G_{\mathbb{J}_{S}} (i_{S' , S} (\hat{j'})) = G_{\mathbb{J}_{S'}} (\hat{j'}) = G_{\mathbb{J}^{+}} (S' , \hat{j'}) $).
\end{proof}

\begin{Prop}
$(\mathbb{J}^{+} , G_{\mathbb{J}^+})$ is a refinement of every $(\mathbb{J}_S , G_{\mathbb{J}_S})$, and it is good for every element of $E_{\mathbb{J}}$.
\end{Prop}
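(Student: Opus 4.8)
The plan is to prove the two assertions in turn: the refinement claim is essentially formal, and the goodness claim reduces to one explicit pullback computation together with Lemma~\ref{Lemma_refinements}. For the refinement claim, I would first observe that for $S\subseteq S'$ the transition map $i_{S,S'}\colon\mathbb{J}_S\to\mathbb{J}_{S'}$ is a full and faithful functor satisfying $G_{\mathbb{J}_{S'}}\cdot i_{S,S'}=G_{\mathbb{J}_S}$: full faithfulness is immediate from the hom-set formula of the construction, $\Hom_{\mathbb{J}_S}(\hat{j'},\hat j)\cong\Hom_{\mathbb{J}/T}(H_{T',T}(\hat{j'}),\hat j)$ when $T\subseteq T'$ (and empty otherwise), which depends only on $T$ and $T'$ and not on the ambient $S$; and the $G$-compatibility is precisely what was used in the preceding proposition to see that $G_{\mathbb{J}^{+}}$ is well defined. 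Since $\mathbb{J}^{+}$ is the filtered colimit of the $\mathbb{J}_S$ along these full faithful functors, each canonical inclusion $\kappa_S\colon\mathbb{J}_S\to\mathbb{J}^{+}$ is again full and faithful — a coprojection into a filtered colimit of full faithful functors is full and faithful, since the relevant transition maps on hom-sets are bijections — and $G_{\mathbb{J}^{+}}\cdot\kappa_S=G_{\mathbb{J}_S}$ holds by the very definition $G_{\mathbb{J}^{+}}(S,\hat j)=G_{\mathbb{J}_S}(\hat j)$. This exhibits $(\mathbb{J}^{+},G_{\mathbb{J}^{+}})$ as a refinement of each $(\mathbb{J}_S,G_{\mathbb{J}_S})$ in the sense of Definition~\ref{refinement}; taking $S=\emptyset$ (note $\mathbb{J}_{\emptyset}=\mathbb{J}$ and $G_{\mathbb{J}_{\emptyset}}=G_{\mathbb{J}}$) it is in particular a refinement of $(\mathbb{J},G_{\mathbb{J}})$.

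For the goodness claim, fix $e=(j,f,W'\xrightarrow{\epsilon}W)\in E_{\mathbb{J}}$ and put $S=\{e\}$. Since $\mathbb{J}_{\{e\}}$ is a refinement of $\mathbb{J}$ (via the inclusion of the $T=\emptyset$ part) and $\mathbb{J}^{+}$ is a refinement of $\mathbb{J}_{\{e\}}$ by the previous paragraph, Lemma~\ref{Lemma_refinements} reduces us to showing that $(\mathbb{J}_{\{e\}},G_{\mathbb{J}_{\{e\}}})$ is good for $e$, i.e.\ that $(\psi_{\mathbb{J},\mathbb{J}_{\{e\}}})_W(j,f)\in\Image(u_{\mathbb{J}_{\{e\}}}(\epsilon))$. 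I would take the object $\hat j\coloneqq(\mathrm{id}_j)\in\mathbb{J}/\{e\}\subseteq Ob(\mathbb{J}_{\{e\}})$, so that $G_{\mathbb{J}_{\{e\}}}(\hat j)=P^{e}_{\hat j}=G_{\mathbb{J}}(j)\times_W W'$, with pullback projections $p\colon P^{e}_{\hat j}\to G_{\mathbb{J}}(j)$ and $g\colon P^{e}_{\hat j}\to W'$, where $p=G_{\mathbb{J}_{\{e\}}}\bigl(H^{(\hat j)}_{\{e\},\emptyset}\bigr)$ is the image of the slice map $H^{(\hat j)}_{\{e\},\emptyset}\colon\hat j\to j$ and $\epsilon\cdot g=f\cdot p$ since the pullback square commutes. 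Now $(\psi_{\mathbb{J},\mathbb{J}_{\{e\}}})_W$ simply reinterprets $(j,f)$ as the class of $f\colon G_{\mathbb{J}}(j)=G_{\mathbb{J}_{\{e\}}}(j)\to W$ inside $u_{\mathbb{J}_{\{e\}}}(W)$, and so in that filtered colimit
\[
(\psi_{\mathbb{J},\mathbb{J}_{\{e\}}})_W(j,f)=(\hat j,\,f\cdot p)=(\hat j,\,\epsilon\cdot g)=u_{\mathbb{J}_{\{e\}}}(\epsilon)\bigl((\hat j,\,g)\bigr),
\]
where the first equality uses the morphism $H^{(\hat j)}_{\{e\},\emptyset}\colon\hat j\to j$ to identify the two representatives, and the last equality is just that $u_{\mathbb{J}_{\{e\}}}(\epsilon)$ acts by postcomposition with $\epsilon$. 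Hence $(\psi_{\mathbb{J},\mathbb{J}_{\{e\}}})_W(j,f)$ lies in $\Image(u_{\mathbb{J}_{\{e\}}}(\epsilon))$, so $(\mathbb{J}_{\{e\}},G_{\mathbb{J}_{\{e\}}})$ is good for $e$; by Lemma~\ref{Lemma_refinements} its refinement $(\mathbb{J}^{+},G_{\mathbb{J}^{+}})$ is good for $e$ too, and as $e\in E_{\mathbb{J}}$ was arbitrary this finishes the proof.

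I expect the only delicate points to be the variance bookkeeping in the colimit defining $u_{\mathbb{J}}$ — in particular, verifying that the slice map $\hat j\to j$ is exactly the one along which $(j,f)$ and $(\hat j,f\cdot p)$ become identified in $u_{\mathbb{J}_{\{e\}}}(W)$, with the arrow pointing the right way — and the formal (but slightly fiddly) remark that coprojections into a filtered colimit of full faithful functors are again full and faithful; everything else is a routine chase with the defining pullback square in $\mathcal{C}$.
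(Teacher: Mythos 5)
Your proposal is correct and follows essentially the same route as the paper's own proof: exhibit $\mathbb{J}^{+}$ as a refinement of each $\mathbb{J}_S$ via the colimit coprojections (compatible with $G_{\mathbb{J}^{+}}$ by its definition), observe that $(\mathbb{J}_{\{e\}},G_{\mathbb{J}_{\{e\}}})$ is good for $e$ by the very construction of $G_{\mathbb{J}_S}$ via the pullback along $\epsilon$, and transfer goodness to $\mathbb{J}^{+}$ by Lemma~\ref{Lemma_refinements}. You merely spell out in detail (full faithfulness of the coprojections, the explicit identification $(j,f)=(\hat j,f\cdot p)=u_{\mathbb{J}_{\{e\}}}(\epsilon)(\hat j,g)$ in the colimit) what the paper states more tersely, and this bookkeeping is carried out correctly.
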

\begin{proof}
The maps $m_S : \mathbb{J}_S \rightarrow \mathbb{J}^+$ given by the colimit are monomorphisms. Moreover, $G_{\mathbb{J}^+} \cdot m_S = G_{\mathbb{J}_S}$ follows from the way we have defined $G_{\mathbb{J}^+}$.

Consider an element $e = ( j , f, \epsilon) \in E_{\mathbb{J}}$. Because of the way we have defined the generic $G_{\mathbb{J}_S}$, the pair $(\mathbb{J}_{\{e \}} , G_{\mathbb{J}_{\{e\}}})$ is good for $e$. Hence, using Lemma \ref{Lemma_refinements}, we can conclude that the pair $(\mathbb{J}^+ , G_{\mathbb{J}^+})$ is good for $e$ too.
\end{proof}

Neverthless, $(\mathbb{J}^+ , G_{\mathbb{J}^+})$ might not be good for every element of $E_{\mathbb{J}^+}$. In order to obtain a pair $(\mathbb{I} , G_{\mathbb{I}})$ which is good for every element in $E_{\mathbb{I}}$, we make use of the following trick: given $(\mathbb{J} , G_{\mathbb{J}})$, we have constructed
$(\mathbb{J}^+ , G_{\mathbb{J}^+})$. We call $(\mathbb{J}^1, G_{\mathbb{J}^1}) \coloneqq (\mathbb{J}^+ , G_{\mathbb{J}^+})$. Starting from $(\mathbb{J}^1, G_{\mathbb{J}^1})$, we can define a new pair $(\mathbb{J}^2 , G_{\mathbb{J}^2}) \coloneqq ((\mathbb{J}^1)^+, G_{(\mathbb{J}^1)^+})$. By repeating this procedure for every $n\in \mathbb{N}$, we obtain a sequence of embeddings:
\[ \mathbb{J} \coloneqq \mathbb{J}^0 \stackrel{l_{0,1}}{\rightarrow} \mathbb{J}^1 \stackrel{l_{1,2}}{\rightarrow} \mathbb{J}^2 \rightarrow \dots \stackrel{l_{n-1, n}}{\rightarrow} \mathbb{J}^n \rightarrow \dots \]
where, for every $n\in \mathbb{N}$, $\mathbb{J}^{n+1} = (\mathbb{J}^n)^+$.
Moreover, every pair $(\mathbb{J}^{n+1} , G_{\mathbb{J}^{n+1}})$ is a refinement of $(\mathbb{J}^n , G_{\mathbb{J}^n})$, and it is good for every element of $E_{\mathbb{J}^n}$.

\begin{Def}\label{I}
Define $\mathbb{I} \coloneqq \underset{n\in \mathbb{N}}{\colim} \ (\mathbb{J}^n)$ and $G_{\mathbb{I}}: \mathbb{I} \rightarrow \mathcal{C}$ as the functor which sends  $(n , j)$ to $G_{\mathbb{J}^n} (j)$.    
\end{Def}
\begin{Rem}
Note that $G_{\mathbb{I}}$ is well defined, since $(n , j) = (n' , j')$ in $\mathbb{I}$ if and only if $n \leq n'$ (or $n' \leq n$) and $j' = l_{n , n'} (j)$, where $l_{n , n'} \coloneqq l_{n' -1, n'} \cdot \dots \cdot l_{n , n+1}$ (or $j = l_{n' , n} (j')$).
Hence, if $(n , j) = (n' , j')$, then $G_{\mathbb{I}} (n' , j') = G_{\mathbb{J}^{n'}} (j') = G_{\mathbb{J}^{n'}} (l_{n , n'} (j)) = G_{\mathbb{J}^{n}} (j) = G_{\mathbb{I}} (n , j) $ (or $G_{\mathbb{I}} (n , j) = G_{\mathbb{J}^{n}} (j) = G_{\mathbb{J}^{n}} (l_{n' , n} (j')) = G_{\mathbb{J}^{n'}} (j') = G_{\mathbb{I}} (n' , j')$).
Moreover, $(\mathbb{I}, G_{\mathbb{I}})$ is a refinement of each $(\mathbb{J}^n , G_{\mathbb{J}^n})$.
\end{Rem}
\begin{Lemma}
$(\mathbb{I} , G_{\mathbb{I}})$ is good for every $e\in E_{\mathbb{I}}$. 
\end{Lemma}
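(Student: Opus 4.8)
The plan is to reduce goodness of $(\mathbb{I}, G_{\mathbb{I}})$ for an arbitrary element of $E_{\mathbb{I}}$ to goodness of one of the finite stages $(\mathbb{J}^{n+1}, G_{\mathbb{J}^{n+1}})$ for a corresponding element of $E_{\mathbb{J}^n}$, and then to transport this back up along the refinement $(\mathbb{I}, G_{\mathbb{I}})$ of $(\mathbb{J}^{n+1}, G_{\mathbb{J}^{n+1}})$ using Lemma~\ref{Lemma_refinements}. The only facts needed are that $(\mathbb{J}^{n+1}, G_{\mathbb{J}^{n+1}}) = ((\mathbb{J}^n)^{+}, G_{(\mathbb{J}^n)^{+}})$ is good for every element of $E_{\mathbb{J}^n}$ (which holds by construction) and that $(\mathbb{I}, G_{\mathbb{I}})$ is a refinement of each $(\mathbb{J}^n, G_{\mathbb{J}^n})$ (noted in the Remark after Definition~\ref{I}).

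Concretely, I would fix $e = (\hat{j}, f, W' \xrightarrow{\epsilon} W) \in E_{\mathbb{I}}$, so that $\hat{j} \in \mathbb{I}$, $f \colon G_{\mathbb{I}}(\hat{j}) \to W$ and $\epsilon$ is a regular epimorphism of $\mathcal{C}$. Since $\mathbb{I} = \colim_{n\in\mathbb{N}} \mathbb{J}^n$ is a filtered colimit along the $\mathbb{N}$-indexed tower of the embeddings $l_{n,n+1}$, the object $\hat{j}$ has a representative $j\in\mathbb{J}^n$ for some $n$; that is, $\hat{j} = (n,j)$ and $G_{\mathbb{I}}(\hat{j}) = G_{\mathbb{J}^n}(j)$. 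Hence $f$ is also a map $G_{\mathbb{J}^n}(j)\to W$, and $e' \coloneqq (j, f, \epsilon)$ is an element of $E_{\mathbb{J}^n}$ whose image under $E_{\mathbb{J}^n}\to E_{\mathbb{I}}$ is $e$. Now $(\mathbb{J}^{n+1}, G_{\mathbb{J}^{n+1}})$ is good for $e'$ by construction, and, since $(\mathbb{I}, G_{\mathbb{I}})$ is a refinement of $(\mathbb{J}^{n+1}, G_{\mathbb{J}^{n+1}})$, Lemma~\ref{Lemma_refinements} (applied with $\mathbb{J}_1 = \mathbb{J}^n$, $\mathbb{J}_2 = \mathbb{J}^{n+1}$ and the refinement $(\mathbb{I}, G_{\mathbb{I}})$) gives that $(\mathbb{I}, G_{\mathbb{I}})$ is good for $e'$, i.e.\ $(\psi_{\mathbb{J}^n, \mathbb{I}})_W(j,f) \in \Image(u_{\mathbb{I}}(\epsilon))$.

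It then remains to identify this statement with goodness of $(\mathbb{I}, G_{\mathbb{I}})$ for $e$ itself. Unwinding $u_{\mathbb{I}}(W) = \colim_{(m,k)\in\mathbb{I}} \Hom_{\mathcal{C}}(G_{\mathbb{I}}(m,k), W)$, the element $(\psi_{\mathbb{J}^n, \mathbb{I}})_W(j,f)$ is exactly the class of $f$ attached to $\hat{j} = (n,j)$, namely the element of $u_{\mathbb{I}}(W)$ named by the triple $e$; and since $\psi_{\mathbb{I},\mathbb{I}}$ is the identity, ``$(\mathbb{I}, G_{\mathbb{I}})$ is good for $e$'' unwinds precisely to $(\hat{j},f)\in\Image(u_{\mathbb{I}}(\epsilon))$. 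So the two assertions coincide and we are done. I expect the only mildly delicate points to be bookkeeping ones: (i) checking, constructively, that every object of $\mathbb{I}$ — and hence every element of $E_{\mathbb{I}}$ — descends to some finite stage $\mathbb{J}^n$, which is immediate since the colimit is sequential; and (ii) the identification in the last step of $(\psi_{\mathbb{J}^n,\mathbb{I}})_W(j,f)$ with the colimit class determined by $e$, together with the corresponding independence from the chosen representative $(n,j)$. Neither is a genuine obstacle, only a careful chase through the colimit definitions.
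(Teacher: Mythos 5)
Your proposal is correct and follows essentially the same route as the paper's proof: descend $e$ to a representative $(j,f,\epsilon)\in E_{\mathbb{J}^n}$, use that $(\mathbb{J}^{n+1},G_{\mathbb{J}^{n+1}})$ is good for it by construction, transfer this to $(\mathbb{I},G_{\mathbb{I}})$ via Lemma~\ref{Lemma_refinements}, and identify $(\psi_{\mathbb{J}^n,\mathbb{I}})_W(j,f)$ with the element of $u_{\mathbb{I}}(W)$ determined by $e$. Your write-up is in fact slightly more careful than the paper's on the final identification step and on well-definedness across representatives.
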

\begin{proof}
If $e = (\overline{i} , f, \epsilon) \in E_{\mathbb{I}}$, then $\overline{i} = (i,f)$ for some $n\in\mathbb{N}$ and $i\in\mathbb{J}^n$. Moreover, $(\psi_{\mathbb{J}^n , \mathbb{I}})_W (i ,f) = (\overline{i} , f)$. Hence, since $(\mathbb{J}^{n+1} , G_{\mathbb{J}^{n+1}})$ is good for $( i ,f, \epsilon) \in E_{\mathbb{J}^n}$, then, by Lemma \ref{Lemma_refinements}, $(\mathbb{I}, G_{\mathbb{I}})$ is good for it as well. Hence $\psi_{\mathbb{I}, \mathbb{I}}(\overline{i} , f) = (\overline{i} , f) = \psi_{\mathbb{J}^{n} , \mathbb{I}} (i ,f) \in \Image (u_{\mathbb{I}} (\epsilon))$, and so $(\mathbb{I}, G_{\mathbb{I}})$ is good for $e$.
\end{proof}

\subsection{The site \texorpdfstring{$(\mathcal{A} , R)$}{(\textit{A}, R)} has a conservative set of points.}
 Throughout this subsection, once that we have fixed $(\mathbb{J},G_{\mathbb{J}})$, the pair $(\mathbb{I}, G_{\mathbb{I}})$ will be defined as in Definition \ref{I}.

\begin{Lemma}\label{injectivity}
For every $(\mathbb{J}, G_{\mathbb{J}})$, the map $\overline{\psi_{\mathbb{J} , \mathbb{I}}}$ is injective.
\end{Lemma}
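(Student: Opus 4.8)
The plan is to strip away the layers in the construction of $\mathbb{I}$ and reduce everything to a single, completely concrete refinement step, at which point the sheaf condition does all the work. First I would record the soft fact that if $X\to Y_n$ ($n\in\mathbb{N}$) are compatible maps into a directed colimit $Y=\colim_n Y_n$ and each $X\to Y_n$ is injective, then $X\to Y$ is injective, since two elements of $X$ that agree in $Y$ already agree in some $Y_n$. Because $\mathbb{I}=\colim_n \mathbb{J}^n$ and $G_{\mathbb{I}}=\colim_n G_{\mathbb{J}^n}$ with the indexing filtered, one gets $F_{p_{\mathbb{I}}}=\colim_n F_{p_{\mathbb{J}^n}}$ and $\overline{\psi_{\mathbb{J},\mathbb{I}}}$ factors as $F_{p_{\mathbb{J}}}\xrightarrow{\overline{\psi_{\mathbb{J},\mathbb{J}^n}}}F_{p_{\mathbb{J}^n}}\to F_{p_{\mathbb{I}}}$; so it suffices to show each one-step map $\overline{\psi_{\mathbb{J}^n,\mathbb{J}^{n+1}}}$ is injective, whence $\overline{\psi_{\mathbb{J},\mathbb{J}^n}}$ is injective as a composite. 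Running the same reduction on $\mathbb{J}^{n+1}=(\mathbb{J}^n)^+=\colim_{S\in\mathcal{P}_{Fin}(E_{\mathbb{J}^n})}(\mathbb{J}^n)_S$, it is enough to prove the core claim: for an arbitrary pair $(\mathbb{J},G_{\mathbb{J}})$ and an arbitrary $S\in\mathcal{P}_{Fin}(E_{\mathbb{J}})$, the map $\overline{\psi_{\mathbb{J},\mathbb{J}_S}}\colon F_{p_{\mathbb{J}}}\to F_{p_{\mathbb{J}_S}}$ (attached to the refinement $\iota\colon\mathbb{J}\hookrightarrow\mathbb{J}_S$ given by the $T=\emptyset$ component, which is indeed a refinement in the sense of Definition \ref{refinement}) is injective.

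For the core claim I would argue with elements, using that both stalks are directed colimits (since $\mathbb{J}=\overline{J}^{op}$ and $\mathbb{J}_S=\overline{J_S}^{op}$). Given $a,b\in F_{p_{\mathbb{J}}}$ with $\overline{\psi_{\mathbb{J},\mathbb{J}_S}}(a)=\overline{\psi_{\mathbb{J},\mathbb{J}_S}}(b)$, directedness of $\overline{J}$ lets me represent both at a common stage, $a=[(j_0,s_a)]$ and $b=[(j_0,s_b)]$ with $s_a,s_b\in F(G_{\mathbb{J}}(j_0))$. Equality of the images in $F_{p_{\mathbb{J}_S}}=\colim_{\overline{J_S}}F(G_{\mathbb{J}_S}(-))$ yields a morphism $g\colon k'\to\iota(j_0)$ of $\mathbb{J}_S$ with $F(G_{\mathbb{J}_S}(g))(s_a)=F(G_{\mathbb{J}_S}(g))(s_b)$. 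Writing $k'\in\mathbb{J}/T$ with apex $p:=H_T(k')\in\mathbb{J}$, the hom-description of $\mathbb{J}_S$ factors $g$ as $k'\xrightarrow{h}p\xrightarrow{\bar g}j_0$ where $h:=H_{T,\emptyset}^{(k')}$ and $\bar g$ is a morphism of $\mathbb{J}$; hence $G_{\mathbb{J}_S}(g)=G_{\mathbb{J}}(\bar g)\circ\pi$ with $\pi:=G_{\mathbb{J}_S}(h)\colon G_{\mathbb{J}_S}(k')\to G_{\mathbb{J}}(p)$, and therefore $F(\pi)\big(F(G_{\mathbb{J}}(\bar g))(s_a)\big)=F(\pi)\big(F(G_{\mathbb{J}}(\bar g))(s_b)\big)$.

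The key observation — and the only place sheafness is used — is that $\pi$ is assembled purely from pullbacks along covering morphisms. Indeed, by the definition of $G_{\mathbb{J}_S}$ the object $G_{\mathbb{J}_S}(k')$ is the wide pullback over $G_{\mathbb{J}}(p)$ of the objects $P^{e_k}$, and each $P^{e_k}\to G_{\mathbb{J}}(p)$ is a base change of the covering morphism $\epsilon_k$, hence again a covering morphism (covers being stable under pullback); chaining the pullbacks, $\pi$ is a finite composite of base changes of covers, so $F(\pi)$ is a finite composite of the maps $F(U)\to F(V)$ attached to covers $\{V\to U\}$. Since $F$ is a sheaf for the regular topology, each such map is the inclusion of an equalizer, in particular injective, so $F(\pi)$ is injective. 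Consequently $F(G_{\mathbb{J}}(\bar g))(s_a)=F(G_{\mathbb{J}}(\bar g))(s_b)$ in $F(G_{\mathbb{J}}(p))$; but $\bar g\colon p\to j_0$ in $\mathbb{J}$ is precisely the transition morphism of the directed system computing $F_{p_{\mathbb{J}}}$ at the step $j_0\le p$, so $s_a$ and $s_b$ already become equal at stage $p$, i.e. $a=b$.

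I expect the main obstacle to be the identification of $\pi=G_{\mathbb{J}_S}(H_{T,\emptyset}^{(k')})$ with a composite of pullbacks of covering morphisms: one must unwind the definition of $G_{\mathbb{J}_S}$ on the generating morphisms $H_{T',T}^{(\hat\jmath)}$ far enough to see that the induced map to the apex's image is literally built from the $\epsilon_k$'s, and check it is sent to a cover-map by every sheaf. The other mildly delicate point is the bookkeeping that commutes the stalk functor past the two filtered colimits ($\colim_n$ over $\mathbb{N}$ and $\colim_S$ over $\mathcal{P}_{Fin}(E_{\mathbb{J}})$) that define $\mathbb{I}$; both are routine. Everything else is just the fact that a sheaf is a separated presheaf and that separatedness with respect to a covering morphism is inherited under base change.
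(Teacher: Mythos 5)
Your proof is correct, but it takes a genuinely different route from the paper's. The paper argues directly on $\mathbb{I}$: it represents the two stalk elements at the stages $(0,j)$ and $(0,j')$, takes the morphism of $\mathbb{I}$ witnessing their identification, and uses fullness of the refinement, i.e.\ $\Hom_{\mathbb{I}}((0,j'),(0,j))\cong\Hom_{\mathbb{J}}(j',j)$ together with the fact that $G_{\mathbb{I}}$ restricts to $G_{\mathbb{J}}$, to transport that witness back into $\mathbb{J}$; notably it never invokes the sheaf condition on $F$. You instead peel the construction apart: commute the stalk past the two filtered colimits defining $\mathbb{I}$ (over $\mathbb{N}$ and over $\mathcal{P}_{Fin}(E_{\mathbb{J}})$), reduce to a single refinement step $\mathbb{J}\rightarrow\mathbb{J}_S$, and there observe that the structure map from a new object to its apex is a finite composite of base changes of covering (regular) epimorphisms, so that separatedness of the sheaf $F$ makes restriction along it injective and lets you push any identification back into $\mathbb{J}$. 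What your route buys is an explicit treatment of the point the paper's short argument leaves implicit: a priori the identification in $F_{p_{\mathbb{I}}}$ is witnessed at a common later stage of $\mathbb{I}$ that need not lie in the image of $\mathbb{J}$, and it is exactly separatedness plus pullback-stability of regular epimorphisms that allows such a witness to be descended --- for a presheaf that is not separated the map $\overline{\psi_{\mathbb{J},\mathbb{I}}}$ can genuinely fail to be injective, so some appeal to the sheaf condition is unavoidable in a fully spelled-out argument. What the paper's route buys is brevity, avoiding your two bookkeeping debts (identifying $\pi=G_{\mathbb{J}_S}(H^{(k')}_{T,\emptyset})$ with the wide-pullback projection to $G_{\mathbb{J}}(p)$, and commuting the stalk with the two colimits); both of these do hold as you expect, and pullback-stability of covers is available in the paper's setting (a finitely complete, indeed abelian, $\mathcal{C}$, where it is needed anyway for $R$ to be a Grothendieck topology), so your argument goes through constructively.
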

\begin{proof}
Consider two elements $(j , t)$ and $(j' , t')$ of $F_{p_{\mathbb{J}}}$. By construction, $\overline{\psi_{\mathbb{J} , \mathbb{I}}}$ sends $(j , t)$ to $(\overline{j} , t)$, where $\overline{j} = (0 , j) \in \mathbb{I}$. In the same way,  $\overline{\psi_{\mathbb{J} , \mathbb{I}}} (j' , t') = (\overline{j'} , t')$, where $\overline{j'} = (0 , j')$.
Now, $(\overline{j} , t) = (\overline{j'} , t')$ means that there exists a map $\overline{j'} \rightarrow \overline{j}$ (or $\overline{j} \rightarrow \overline{j'}$) in $\mathbb{I}$  such that $F(G_{\mathbb{I}} (\overline{j'} \rightarrow \overline{j})) (t) = t'$ (or  $F(G_{\mathbb{I}} (\overline{j} \rightarrow \overline{j'}) (t') = t$).
Note that $\Hom_{\mathbb{I}} (\overline{j'} , \overline{j}) \cong \Hom_{\mathbb{J}} (j' , j)$. Using this, together with the definition of $G_{\mathbb{I}}$, we conclude that there exists $j' \rightarrow j$ (or $j \rightarrow j'$) in $\mathbb{J}$  such that $F(G_{\mathbb{J}} (j' \rightarrow j)) (t) = t'$ (or  $F(G_{\mathbb{J}} (j \rightarrow j')) (t') = t$). Hence $(j , t) = (j' , t')$, and we have the injectivity of $\overline{\psi_{\mathbb{J} , \mathbb{I}}}$.
\end{proof}

To show that $p_{\mathbb{I}}$ is a point, we first recall (part of) a lemma from  \cite[Proposition \stackstag{00YC}]{Stacks-project}; we also sketch of the proof, to witness that the proof is valid in the constructive setting.
\begin{Lemma}\label{prelim}
Let $(\mathcal{C}, I)$ be a site, and suppose that $\mathcal{C}$ is finitely complete. Let $u: \mathcal{C} \rightarrow \Cat{Set}$ be a functor such that
\begin{enumerate}
 \item $u$ commutes with finite limits;
 \item for every $\{ U_i \rightarrow U \}_{i\in I}$ covering family, $\underset{i\in I}{\coprod} u (U_i) \rightarrow u (U)$ is surjective.
\end{enumerate}
Then $u$ is a point. 
\end{Lemma}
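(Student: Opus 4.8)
The plan is to verify the three conditions of Definition \ref{def_point}; the first two are essentially immediate, and the third carries all the content. Condition (1) is literally hypothesis (2) of the lemma. For condition (2), fix a covering family $\{U_k \to U\}_{k\in K}$ and a morphism $V \to U$: since $\mathcal{C}$ is finitely complete the pullback $U_k \times_U V$ exists in $\mathcal{C}$, and since $u$ commutes with finite limits it sends this pullback square to a pullback square in $\Cat{Set}$, which is exactly the statement that the comparison map $u(U_k \times_U V) \to u(U_k)\times_{u(U)} u(V)$ is a bijection.

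The substance is condition (3): that the stalk functor $(-)_p \colon \Cat{Sh}(\mathcal{C}, I) \to \Cat{Set}$, $F \mapsto \colim_{(U,x)} F(U)$, is left exact. First I would study the index category $\mathcal{I}_u$ whose objects are pairs $(U,x)$ with $x\in u(U)$ and whose morphisms $(U,x)\to(V,y)$ are arrows $f\colon U\to V$ with $u(f)(x)=y$, so that the colimit above is indexed by $\mathcal{I}_u^{op}$. Using that $u$ commutes with finite limits, I would show $\mathcal{I}_u$ is cofiltered: it is inhabited because $u$ preserves the terminal object $1$, so $u(1)$ is a singleton and yields an object; any two objects $(U,x)$ and $(V,y)$ receive maps from $(U\times V,(x,y))$ via $u(U\times V)\cong u(U)\times u(V)$; and a parallel pair $(U,x)\rightrightarrows(V,y)$ arising from $f,g\colon U\to V$ is equalized by a map out of $(\mathrm{Eq}(f,g),z)$, where $z$ exists because $u(f)(x)=u(g)(x)=y$ and $u(\mathrm{Eq}(f,g))=\mathrm{Eq}(u(f),u(g))$. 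Hence $\mathcal{I}_u^{op}$ is filtered, and $F_p$ is a filtered colimit.

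Next I would factor $(-)_p$ as the inclusion $\Cat{Sh}(\mathcal{C}, I)\hookrightarrow\Cat{PSh}(\mathcal{C})$ followed by the presheaf-level stalk $\hat F\mapsto\colim_{(U,x)}\hat F(U)$, which is given by the same formula. This presheaf-level functor is a filtered colimit of the evaluation functors $\hat F\mapsto\hat F(U)$, which are exact; since filtered colimits commute with finite limits in $\Cat{Set}$ --- a fact that holds in $\mathbf{CZF}$ --- the presheaf-level stalk preserves finite limits. The inclusion of sheaves into presheaves creates finite limits, because a pointwise limit of a diagram of sheaves is again a sheaf. Composing, $(-)_p$ preserves finite limits, which is condition (3), and therefore $u$ is a point.

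I expect the only steps needing genuine constructive attention to be the cofilteredness of $\mathcal{I}_u$ and the interchange of filtered colimits with finite limits in $\Cat{Set}$; both are in fact unproblematic. The objects witnessing cofilteredness are the explicit finite limits $1$, $U\times V$, $\mathrm{Eq}(f,g)$ in $\mathcal{C}$, which $u$ preserves by hypothesis, so no choice principle intervenes; and the interchange law for filtered colimits of sets is provable in $\mathbf{CZF}$. Everything else is formal and carries over verbatim from the classical proof of \cite[Proposition \stackstag{00YC}]{Stacks-project}.
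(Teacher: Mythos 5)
Your proposal is correct and follows essentially the same route as the paper's proof: conditions (1) and (2) of Definition \ref{def_point} are dispatched directly from the hypotheses, and the left exactness of the stalk functor is obtained by showing the neighborhood category is cofiltered (inhabitedness via the terminal object, binary cones via products, equalizing maps via equalizers, all using that $u$ preserves finite limits) together with the commutation of filtered colimits with finite limits in $\Cat{Set}$. Your write-up merely spells out the equalizer step and the passage through presheaves, which the paper leaves implicit.
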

\begin{proof}
The only non trivial part of the proof consists in deriving Point (3) of Definition \ref{def_point}, i.e.\ that the stalk functor is left exact. Since finite limits commute with filtered colimits, it is enough to show that the category $\{(U, x) \mid U\in \mathcal{C}, \ x \in u(U)\}$ is cofiltered (so that the opposite of this category turns out to be filtered). Indeed, it is inhabited, since we can take the neighborhood given by the final object in $\mathcal{C}$ together with the element of the singleton set. Then, for every pair of objects $(U, x)$ and $(V, y)$, the fact that $u$ commutes with products gives the existence of a neighborhood $(U \times V , z)$ which can be mapped to both $(U, x)$ and $(V, y)$. Third, it is possible to prove that the category has equalizers, and this completes the proof of the cofilterness.  
\end{proof}

With this we show:
\begin{Lemma}\label{p_I point}
Let $(\mathcal{C}, R)$ be a small site equipped with the regular topology, and suppose $\mathcal{C}$ is finitely complete. Then the functor $u_{\mathbb{I}}$, defined as in \ref{u_J}, gives a point $p_{\mathbb{I}}$.
\end{Lemma}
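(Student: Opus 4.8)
The plan is to verify the two hypotheses of Lemma~\ref{prelim}. Since $\mathcal{C}$ is assumed finitely complete, all that remains is to check that $u_{\mathbb{I}} \colon \mathcal{C} \to \Cat{Set}$ commutes with finite limits, and that for every covering family $\{U_k \to U\}_{k\in K}$ of $(\mathcal{C}, R)$ the map $\coprod_{k\in K} u_{\mathbb{I}}(U_k) \to u_{\mathbb{I}}(U)$ is surjective. Because $R$ is the regular topology, a covering family is just a single regular epimorphism $\epsilon \colon W' \to W$, so the second requirement reads: $u_{\mathbb{I}}(\epsilon) \colon u_{\mathbb{I}}(W') \to u_{\mathbb{I}}(W)$ is surjective.

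For the finite-limit condition, I would note that by \ref{u_J} the functor $u_{\mathbb{I}} = \colim_{i \in \mathbb{I}} \Hom_{\mathcal{C}}(G_{\mathbb{I}}(i), -)$ is a filtered colimit of representable functors: filtered because $\mathbb{I}$ is the dual of a directed set, being the colimit along $\mathbb{N}$ of the $\mathbb{J}^n$, each of which was shown to be the dual of a directed set, and colimits in $[\mathcal{C}, \Cat{Set}]$ are computed pointwise. Each representable $\Hom_{\mathcal{C}}(G_{\mathbb{I}}(i), -)$ preserves all limits that exist in $\mathcal{C}$, in particular the finite ones, and filtered colimits commute with finite limits in $\Cat{Set}$ by an argument that is valid in $\mathbf{CZF}$. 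Hence $u_{\mathbb{I}}$ commutes with finite limits.

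For the surjectivity condition, recall that an arbitrary element of the filtered colimit $u_{\mathbb{I}}(W)$ is witnessed by a pair $(i, f)$ with $i \in \mathbb{I}$ and $f \colon G_{\mathbb{I}}(i) \to W$. Given the cover $\{W' \xrightarrow{\epsilon} W\}$, the triple $e \coloneqq (i, f, \epsilon)$ lies in $E_{\mathbb{I}}$, and by the goodness of $(\mathbb{I}, G_{\mathbb{I}})$ for every element of $E_{\mathbb{I}}$ we obtain $(\psi_{\mathbb{I}, \mathbb{I}})_W(i, f) \in \Image(u_{\mathbb{I}}(\epsilon))$. Since $\mathbb{I}$ is a refinement of itself along the identity functor, $\psi_{\mathbb{I}, \mathbb{I}}$ is the identity, so $(i, f)$ itself lies in $\Image(u_{\mathbb{I}}(\epsilon))$; as $(i,f)$ was arbitrary, $u_{\mathbb{I}}(\epsilon)$ is surjective. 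Lemma~\ref{prelim} then yields that $u_{\mathbb{I}}$ is a point $p_{\mathbb{I}}$.

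Most of the work has already been absorbed into the construction of $(\mathbb{I}, G_{\mathbb{I}})$ and the proof that it is good for all of $E_{\mathbb{I}}$, so this step is largely bookkeeping: identifying the covering families of the regular topology with single regular epimorphisms, and observing that the self-refinement natural transformation $\psi_{\mathbb{I},\mathbb{I}}$ is the identity. The only points that genuinely require care are the constructive subtleties shared with the rest of the paper — that filtered colimits of sets commute with finite limits in $\mathbf{CZF}$, and that each element of such a colimit is really witnessed at some stage of the diagram — and I expect checking the constructive robustness of the "filtered colimits commute with finite limits" argument to be the most delicate point, though it is entirely standard.
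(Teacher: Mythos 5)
Your proposal is correct and takes essentially the same approach as the paper: it verifies the two hypotheses of Lemma~\ref{prelim}, obtaining commutation with finite limits from $u_{\mathbb{I}}$ being a filtered colimit of representables, and surjectivity on covers (single regular epimorphisms) from the goodness of $(\mathbb{I}, G_{\mathbb{I}})$ for every element of $E_{\mathbb{I}}$. The only cosmetic difference is that you invoke the preceding lemma directly, noting that the self-refinement map $\psi_{\mathbb{I},\mathbb{I}}$ is the identity, whereas the paper re-unwinds an element $(\overline{i},f)$ to a stage $\mathbb{J}^n$ and applies Lemma~\ref{Lemma_refinements} inline --- the same argument either way.
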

\begin{proof}
We check that conditions (1) and (2) of Lemma \ref{prelim} are satisfied:
\begin{enumerate}
    \item since it is defined as a filtered colimit in Set, $u_{\mathbb{I}}$ commutes with finite limits;
    \item since covering families are given by single epimorphisms, we have to verify that, for every epimorphism $\epsilon : W' \twoheadrightarrow W$, $u_{\mathbb{I}} (\epsilon) : u_{\mathbb{I}} (W') \rightarrow u_{\mathbb{I}} (W)$ is surjective. This is a consequence of the fact that $(\mathbb{I} , G_{\mathbb{I}})$ is good for $E_{\mathbb{I}}$: indeed, if $(\overline{i} , f)\in u_{\mathbb{I}} (W)$, then $\overline{i} = (n , i)$ for some $n\in \mathbb{N}$ and some $i\in \mathbb{J}^n$. Called $e \coloneqq (i , f, \epsilon)$, then, by construction, $(\mathbb{J}^{n+1}, G_{\mathbb{J}^{n+1}})$ is good for $e$. Since $(\mathbb{I} , G_{\mathbb{I}})$ is a refinement of $(\mathbb{J}^{n+1}, G_{\mathbb{J}^{n+1}})$, then $(\mathbb{I} , G_{\mathbb{I}})$ is good for $e$ too. Hence, $(\overline{i} , f) = \psi_{\mathbb{J}^n , \mathbb{I}}(i , f) \in \Image (u_{\mathbb{I}} (\epsilon))$, and we have the surjectivity of $u_{\mathbb{I}} (\epsilon)$. \qedhere  
\end{enumerate}
\end{proof}

Using Lemma \ref{injectivity} and Lemma \ref{p_I point}, we are able to prove the main result of this section, inspired by the one given in  \cite[Proposition \stackstag{00YQ}]{Stacks-project} for the non constructive treatment.
\begin{Th}\label{main-theorem}
Let $(\mathcal{C} , R)$ be a small site equipped with the regular topology. If $\mathcal{C}$ is finitely complete, then $(\mathcal{C} ,R)$ has a conservative set of points.
\end{Th}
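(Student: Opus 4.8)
The plan is to assemble, for each object of $\mathcal{C}$, a point tailored to ``see'' the sections over that object, and then to invoke Lemma~\ref{suff_cond}. Since $\mathcal{C}$ is small, the collection $Ob(\mathcal{C})$ is a set, and it will serve as the index set of the family of points.

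Fix $U\in Ob(\mathcal{C})$. I would apply the construction of Subsection~\ref{Constructing_I} starting from the pair $(\mathbb{J}_U , G_{\mathbb{J}_U})$ in which $\overline{J}_U$ is the one-element directed set and $G_{\mathbb{J}_U}$ sends its unique object to $U$; then $u_{\mathbb{J}_U}(V)=\Hom_{\mathcal{C}}(U,V)$, and for every $F\in\Cat{Sh}(\mathcal{C},R)$ the associated stalk is simply $F_{p_{\mathbb{J}_U}}=F(U)$, the colimit being over a terminal category. Passing to the refinement $(\mathbb{I}_U , G_{\mathbb{I}_U})$ of Definition~\ref{I}, which is good for every element of $E_{\mathbb{I}_U}$, Lemma~\ref{p_I point} --- whose hypothesis that $\mathcal{C}$ is finitely complete is exactly the one we are given --- tells us that $u_{\mathbb{I}_U}$ determines a point $p_U\coloneqq p_{\mathbb{I}_U}$ of $(\mathcal{C},R)$. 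Since the assignment $U\mapsto(\mathbb{I}_U , G_{\mathbb{I}_U})\mapsto p_U$ is uniform in $U$, Strong Collection produces a genuine set of points $\{p_U\}_{U\in Ob(\mathcal{C})}$.

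Next I would verify that this family meets the criterion of Lemma~\ref{suff_cond}. Given $F\in\Cat{Sh}(\mathcal{C},R)$, $U\in Ob(\mathcal{C})$ and $s,s'\in F(U)$, I take the index $U$ and the element $x\coloneqq(\psi_{\mathbb{J}_U,\mathbb{I}_U})_U(id_U)\in u_{\mathbb{I}_U}(U)$, i.e.\ the germ at $U$ of the identity of $U$ read through the refinement inclusion $\mathbb{J}_U\hookrightarrow\mathbb{I}_U$. Unwinding the two colimit presentations of the stalk, the element $(U,x,s)\in F_{p_U}$ is exactly the image of $s$ under the canonical map $\overline{\psi_{\mathbb{J}_U,\mathbb{I}_U}}\colon F_{p_{\mathbb{J}_U}}=F(U)\longrightarrow F_{p_{\mathbb{I}_U}}$, and likewise for $s'$. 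By Lemma~\ref{injectivity} this map is injective, so $(U,x,s)=(U,x,s')$ forces $s=s'$. Hence the hypothesis of Lemma~\ref{suff_cond} is satisfied, and $\{p_U\}_{U\in Ob(\mathcal{C})}$ is a conservative set of points, which is the assertion of the theorem.

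Most of the genuine work has already been carried out in Subsection~\ref{Constructing_I} and in Lemmas~\ref{injectivity} and~\ref{p_I point} --- in particular, that iterating $(-)^{+}$ and passing to the colimit over $\mathbb{N}$ yields a pair good for its own $E$, and that the resulting $u_{\mathbb{I}}$ is then a point. Two things still deserve care. First, the identification used above: one must check that the refinement map on stalks $F(U)=F_{p_{\mathbb{J}_U}}\to F_{p_{\mathbb{I}_U}}$ really is germ formation at the neighbourhood $(U,x)$ in the sense of Lemma~\ref{suff_cond}, which is a diagram chase comparing Definition~\ref{def_point} with the descriptions of $F_{p_{\mathbb{J}}}$ and of $\overline{\psi_{\mathbb{J},\mathbb{I}}}$ recalled in Subsection~\ref{Constructing_I}. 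Second, the set-theoretic hygiene: to know that $\{p_U\}_{U\in Ob(\mathcal{C})}$ is a set in $\mathbf{CZF}$ one checks that each step of Subsection~\ref{Constructing_I} stays within Exponentiation, Union and Strong Collection and depends functionally on its input, so that the family is obtained by a single application of Strong Collection to the set $Ob(\mathcal{C})$. I expect this bookkeeping, rather than any genuinely new mathematical idea, to be the main obstacle.
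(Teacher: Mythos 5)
Your proposal is correct and follows essentially the same route as the paper: for each $U$ take the singleton directed set with $G_{\mathbb{J}}(\ast)=U$, pass to $(\mathbb{I},G_{\mathbb{I}})$, obtain a point via Lemma~\ref{p_I point}, note $F_{p_{\mathbb{J}}}=F(U)$, and use the injectivity of $\overline{\psi_{\mathbb{J},\mathbb{I}}}$ (Lemma~\ref{injectivity}) to verify the hypothesis of Lemma~\ref{suff_cond}. Your explicit identification of the witness $x$ as the germ of $id_U$ and the remark on Strong Collection merely spell out details the paper leaves implicit.
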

\begin{proof}
To prove the claim, we want to use Lemma \ref{suff_cond}. We start by fixing an object $U$ of $\mathcal{C}$, and by considering the directed set $(\{\ast \} , \leq_J)$, where $\leq_J$ is the trivial relation. Let $\mathbb{J}$ be its dual, and define $G_{\mathbb{J}} (\ast) \coloneqq U$. Constructing the corresponding pair $(\mathbb{I} , G_{\mathbb{I}})$, we obtain a point $p_{\mathbb{I}}$ such that, for any given $F\in \Cat{Sh}(\mathcal{C} , R)$, the map $\overline{\psi_{\mathbb{J} , \mathbb{I}}} : F_{p_{\mathbb{J}}} \rightarrow F_{p_\mathbb{I}}$ is injective. Note that $F_{p_{\mathbb{J}}} = F(U)$ because of the way we have defined $G_{\mathbb{J}}$.

Repeating this for every $U\in \mathcal{C}$, we obtain a colelction of points $\{p_U\}_{U\in \mathcal{C}}$ satisfying the hypothesis of Lemma \ref{suff_cond}.
The collection of all this points is thus a conservative set, and the site $(\mathcal{C} , R)$ has enough points.
\end{proof}

\begin{Rem}
Theorem \ref{main-theorem} might seem surprising, since results on enough points correspond to completeness theorems and are often not provable constructively; however the present case is comparable to the completeness for regular logic proven in \cite[Corollary 4.16]{Forssell-Lumsdaine}, and another proof of the result can be deduced from that.
\end{Rem}

As an  immediate consequence of this, we achieve that:

\begin{Cor}\label{Cor}
If $\mathcal{A}$ is a small abelian category and $R$ is the regular topology on it, then the site $(\mathcal{A} , R)$ has a conservative set of points. 
\end{Cor}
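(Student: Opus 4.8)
The plan is simply to specialize Theorem~\ref{main-theorem} to the case $\mathcal{C} = \mathcal{A}$. That theorem already carries out all the constructive work --- for each object $U$ it builds a point $p_U$ out of the inductively refined pair $(\mathbb{I}, G_{\mathbb{I}})$ of Subsection~\ref{Constructing_I} and verifies the sufficient condition of Lemma~\ref{suff_cond} --- so the only thing left to do here is to confirm that the hypotheses of Theorem~\ref{main-theorem} hold for $(\mathcal{A}, R)$.

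Two checks are needed. First, $(\mathcal{A}, R)$ must be a small site equipped with the regular topology: smallness of $\mathcal{A}$ is a standing hypothesis, and $R$ is by definition the regular topology, so this is immediate. (One may note in passing that in an abelian category every epimorphism is regular, being the cokernel of its kernel, so the regular topology coincides with the epimorphism topology; but we do not even need this observation.) Second, $\mathcal{A}$ must be finitely complete. Here I would argue that any abelian category has all finite limits: it has a zero object, which in particular is terminal; it has binary products, realised by the biproduct $\oplus$; and it has equalizers, since the equalizer of $f, g \colon A \to B$ is the kernel of $f - g$, which exists by the abelian axioms. A terminal object together with binary products and equalizers yields all finite limits, and each of these constructions uses only the abelian structure, so the argument is constructive and goes through in $\mathbf{CZF}$.

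Having verified both hypotheses, the corollary follows directly from Theorem~\ref{main-theorem}: the family $\{p_U\}_{U \in \mathcal{A}}$ produced there is a conservative set of points for $(\mathcal{A}, R)$. There is essentially no obstacle at this stage --- all the content lies in Theorem~\ref{main-theorem} and the constructions of Subsection~\ref{Constructing_I}, which is precisely why the statement is phrased as a corollary. The only point requiring a moment's care is that ``finitely complete'' in the hypothesis of Theorem~\ref{main-theorem} is read as ``has all finite limits'' and that the abelian structure of $\mathcal{A}$ supplies these limits constructively, as indicated above.
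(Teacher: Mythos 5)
Your proposal is correct and follows exactly the paper's own route: the paper's proof of this corollary is simply an appeal to Theorem~\ref{main-theorem} together with the smallness and finite completeness of $\mathcal{A}$. Your additional spelling-out of why an abelian category is finitely complete (terminal object from the zero object, binary products from biproducts, equalizers as kernels of differences) is a harmless and constructively valid elaboration of the same argument.
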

\begin{proof}
The thesis follows from Theorem \ref{main-theorem}, using the finite completeness and the smallness of $\mathcal{A}$. 
\end{proof}

We are now able to state the main goal of the second part of the paper.

\begin{Th}
Any small abelian category $\mathcal{A}$ admits a full exact embedding into the category of sheaves of modules over the ringed space $(X_{\Cat{Sh}(\mathcal{A}, R)}, \mathbb{Z}_{X_{\Cat{Sh} (\mathcal{A}, R)}})$.
\end{Th}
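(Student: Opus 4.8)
The plan is to assemble the embedding \ref{embedding} of Section \ref{Description-new-embedding}, now that each of its ingredients has been secured constructively. First I would invoke the Yoneda functor $J : \mathcal{A} \rightarrow \Cat{Ab}(\mathcal{A}, R)$: by \cite[Appendix A]{Exact-categories} it is a full exact embedding of $\mathcal{A}$ into the category of sheaves of abelian groups on the site $(\mathcal{A}, R)$, $R$ the regular topology, and the cited argument goes through in $\mathbf{CZF}$.

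Next, by Corollary \ref{Cor} the site $(\mathcal{A}, R)$ has a conservative set of points, and hence so does the Grothendieck topos $\Cat{Sh}(\mathcal{A}, R)$. Applying the Butz--Moerdijk construction \cite{Butz-Moerdijk} to $\mathcal{T} = \Cat{Sh}(\mathcal{A}, R)$ yields a topological space $X_{\Cat{Sh}(\mathcal{A}, R)}$ together with a geometric morphism $\Phi : \Cat{Sh}(X_{\Cat{Sh}(\mathcal{A}, R)}) \rightarrow \Cat{Sh}(\mathcal{A}, R)$ whose inverse image $\phi^{\ast}$ is a full exact embedding of toposes.

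I would then lift $\phi^{\ast}$ to abelian group objects. Since $\phi^{\ast}$ is exact it preserves finite products, so it carries abelian group objects to abelian group objects, inducing $\phi^{\ast}_{\mid \Cat{Ab}(\mathcal{A}, R)} : \Cat{Ab}(\mathcal{A}, R) \rightarrow \Cat{Ab}(X_{\Cat{Sh}(\mathcal{A}, R)})$; because the forgetful functors from the categories of abelian group objects to the underlying sheaf categories are faithful and reflect isomorphisms, this lift remains a full, faithful, exact embedding. Composing with $J$ gives the full exact embedding $\mathcal{A} \rightarrow \Cat{Ab}(X_{\Cat{Sh}(\mathcal{A}, R)})$, and finally I would identify $\Cat{Ab}(X_{\Cat{Sh}(\mathcal{A}, R)})$ with the category of sheaves of modules over the ringed space $(X_{\Cat{Sh}(\mathcal{A}, R)}, \mathbb{Z}_{X_{\Cat{Sh}(\mathcal{A}, R)}})$, the constant sheaf with value $\mathbb{Z}$ being the initial ring object on the space; this identification is standard and constructively unproblematic.

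The main obstacle is not the assembly, which is essentially formal once the pieces are in place, but making sure every imported ingredient is genuinely constructive: in particular that the Butz--Moerdijk space and geometric morphism can be built in $\mathbf{CZF}$ from a \emph{set} of conservative points (the output of Theorem \ref{main-theorem}) without appeal to choice or to a proper class of points, and that passing from ``conservative set of points for the site $(\mathcal{A},R)$'' to ``conservative set of points for the topos $\Cat{Sh}(\mathcal{A},R)$'' needs no classical input. The one place where a small verification is still required is the persistence of fullness of $\phi^{\ast}$ after restriction to abelian group objects, which follows by tracking a morphism of group objects through the forgetful functor and using that $\phi^{\ast}$ is already full on underlying sheaves.
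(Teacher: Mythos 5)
Your proposal matches the paper's own argument: the theorem is proved there exactly by combining the discussion of Section \ref{Description-new-embedding} (Yoneda into $\Cat{Ab}(\mathcal{A},R)$, then the Butz--Moerdijk inverse image lifted to abelian group objects) with Corollary \ref{Cor} supplying the conservative set of points. Your extra remarks on fullness of the lift and on the constructive status of the Butz--Moerdijk construction are consistent with, and slightly more explicit than, what the paper records.
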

\begin{proof}
This comes as a consequence of the discussion in Section \ref{Description-new-embedding} and of Corollary \ref{Cor}.
\end{proof}

\begin{Rem}
Recalled from \cite[Corollary II.6.3]{MacLane-Moerdijk} that, for every $X$ topological space, $\Cat{Sh} (X)$ is equivalent to $Etale(X)$, and that this last one is a full subcategory of $\Cat{Top} / X$, we can also conclude that any small abelian category $\mathcal{A}$ embeds fully into the category $\Cat{Ab}(\Cat{Top} / X_{\Cat{Sh}(\mathcal{A}, R)})$ of abelian group objects of $\Cat{Top} / X_{\Cat{Sh}(\mathcal{A}, R)}$.
\end{Rem}

\section{Conclusion and further developments}
As promised at the beginning of the paper, we have first shown the constructive issues contained in the proofs of the Freyd--Mitchell Embedding Theorem, and then we presented a constructive way to embed a small abelian category into the category of sheaves of modules over a ringed space. At this point, one might ask if this is the best we can acheive in $\mathbf{CZF}$, or if we can go a step further, and find an embedding into a category of modules over a ring. As far as we know, this question is still open.

\newpage

\appendix

\section{Axioms of \texorpdfstring{$\mathbf{CZF}$}{\textbf{CZF}}}\label{appendix A}
The \textit{Constructive Zermelo--Fraenkel Set Theory} $\mathbf{CZF}$ is formulated in first order intuitionistic logic. Here we briefly list the axioms of the theory. For a more detailed explanation we refer to \cite{CZF}.
\begin{enumerate} \addtolength{\itemsep}{0.2em}
 \item Extensionality
 \[ \forall a \forall b ( \forall x ( x\in a \leftrightarrow x\in b ) \rightarrow a = b ) \]
 
 \item Pairing
\[ \forall a \forall b \exists y \forall x ( x\in y \leftrightarrow ( x = a \vee x = b ) ) \]

\item Union 
\[ \forall a \exists y \forall x ( x\in y \leftrightarrow \exists u \in a (x\in u)) \]

\item Strong Infinity
\[ \exists a ( \Ind{a} \wedge \forall b (\Ind{b} \rightarrow \forall x\in a (x\in b))) \]
where we use the following abbreviations:
\begin{align*}
\Succ{(x,y)} & \coloneqq \forall z(z\in y \leftrightarrow z\in x \vee z = x); \\
    \Ind{a}& \coloneqq (\exists y\in a) (\forall z\in y) \bot \wedge (\forall x\in a) (\exists y\in a) \Succ{(x,y)}.    
\end{align*}

\item Set Induction scheme
\[ \forall a ( \forall x\in a \phi(x) \rightarrow \phi (a) ) \rightarrow \forall a \phi (a) \]
for all formulae $\phi (a)$.

\item Bounded Separation scheme
\[ \forall a \exists y \forall x ( x\in y \leftrightarrow x\in a \wedge \phi (x) ) \]
where $\phi (x)$ is a $\Delta_0$ formula in which $y$ is not free. Here a $\Delta_0$ formula (or ``restricted formula'') is a formula where all the quantifiers are bounded.

\item Strong Collection scheme
\[ \forall x\in a \exists y \phi (x,y) \rightarrow \exists b ( \forall x\in a \exists y\in b \phi(x,y) \wedge \forall y \in b \exists x\in a \phi (x,y)) \]
for every formula $\phi(x,y)$.

\item Subset Collection scheme
\[ \exists c \forall u (\forall x\in a \exists y\in b \psi(x,y,u) \rightarrow \exists d\in c (\forall x\in a \exists y\in d \psi(x,y,u) \wedge \forall y\in d \exists x\in a \psi(x,y,u)))\]
for every formula $\psi(x,y,u)$.
\end{enumerate}

\begin{Rem}
Since we used the Axiom of Exponentiation in Proposition \ref{well-poweredness}, we would like to remark the fact that it is possible to derive the Axiom of Exponentiation from the Subset Collection Scheme. More precisely, in $\mathbf{CZF}$ one can prove that
\begin{center}
Subset Collection Scheme $\Rightarrow$ Axiom of Fullness $\Rightarrow$ Axiom of Exponentiation
\end{center}
where the Axiom of Fullness is 
\begin{center}
    $\forall a \forall b \exists c \Full{c, \mv{a}{b}}$
\end{center}
with
\begin{align*}
 \mv{a}{b} &  \coloneqq \{ r\subseteq a \times b \mid \forall u\in a\exists v\in b ( \langle u,v \rangle \in r) \}; \\
 \Full{c, \mv{a}{b}} & \coloneqq (c\subseteq \mv{a}{b} \wedge (\forall r\in \mv{a}{b}\exists s\in c (s\subseteq r) ) .
 \end{align*}
\end{Rem} 
The two implications are proven in \cite[Theorem 5.1.2]{CZF} in the subsystem $\mathbf{ECST}$ (Elementary Constructive Set Theory) of $\mathbf{CZF}$.

\printbibliography

\end{document}